\numberwithin{equation}{section}
\theoremstyle{plain}
\newtheorem{theorem}{Theorem}[section]
\newtheorem*{informal}{Informal version of Theorem \ref{main}}
\newtheorem{corollary}[theorem]{Corollary}
\newtheorem{proposition}[theorem]{Proposition}
\newtheorem{lemma}[theorem]{Lemma}
\theoremstyle{remark}
\newtheorem{remark}[theorem]{Remark}
\newtheorem{example}[theorem]{Example}
\theoremstyle{definition}
\newcommand{\R}{\mathbb{R}}
\newcommand{\N}{\mathbb{N}}
\newcommand{\iii}{\mathtt{i}}
\newcommand{\jjj}{\mathtt{j}}
\newcommand{\eps}{\varepsilon}
\newcommand{\roo}{\varrho}
\renewcommand{\epsilon}{\varepsilon}
\renewcommand{\aa}{\mathtt{a}}
\newcommand{\uu}{\mathtt{u}}
\newcommand{\la}{\underline{\alpha}}
\newcommand{\ua}{\overline{\alpha}}
\newcommand{\OO}{\mathcal{O}}
\newcommand{\ZZ}{\mathcal{Z}}
\newcommand{\IN}{I^{\N}}
\newcommand{\ii}{\mathtt{i}}
\newcommand{\jj}{\mathtt{j}}
\newcommand{\iin}[1]{\ii|_{#1}}
\newcommand{\jjn}[1]{\jj|_{#1}}
\DeclareMathOperator{\dist}{dist}
\DeclareMathOperator{\diam}{diam}
\DeclareMathOperator{\diag}{diag}
\DeclareMathOperator{\proj}{proj}
\begin{document}

\title{Self-affine sets with fibered tangents}

\author{Antti K\"aenm\"aki}
\address{University of Jyvaskyla \\
         Department of Mathematics and Statistics \\
         P.O.\ Box 35 (MaD) \\
         FI-40014 University of Jyv\"askyl\"a \\
         Finland}
\email{antti.kaenmaki@jyu.fi}

\author{Henna Koivusalo}
\address{Department of Mathematics \\
         University of York \\
         Heslington \\
         York YO10 5DD \\
         UK}
\email{henna.koivusalo@york.ac.uk}

\author{Eino Rossi}
\address{University of Jyvaskyla \\
         Department of Mathematics and Statistics \\
         P.O.\ Box 35 (MaD) \\
         FI-40014 University of Jyv\"askyl\"a \\
         Finland}
\email{eino.rossi@jyu.fi}

\thanks{ER acknowledges the support of the Vilho, Yrj{\"o}, and Kalle V{\"a}is{\"a}l{\"a} 
foundation. Research of HK was supported by the EPSRC}
\subjclass[2000]{Primary 28A80; Secondary 37D45, 28A75.}
\keywords{Tangent set, self-affine set, iterated function system}
\date{\today}

\begin{abstract}
  We study tangent sets of strictly self-affine sets in the plane. If a set in this class satisfies the strong separation condition and projects to a line segment for sufficiently many directions, then for each generic point there exists a rotation $\OO$ such that all tangent sets at that point are either of the form $\OO((\R \times C) \cap B(0,1))$, where $C$ is a closed porous set, or of the form $\OO((\ell \times \{ 0 \}) \cap B(0,1))$, where $\ell$ is an interval.
\end{abstract}

\maketitle

\section{Introduction}\label{sec:introduction}

Taking tangents is a standard tool in analysis. Tangents are usually more regular than the original 
object and often they capture its local structure. Further, understanding how tangents behave at 
many points gives information about the global structure as well. For example, tangents of a 
differentiable function are affine maps, and they capture the full behavior of the function. 
Similarly, tangents of measures and sets are useful in understanding the fine structure of the 
objects under study, as well as their global properties. 

Tangent measures were introduced by Preiss \cite{Preiss1987} and they were a crucial ingredient in 
connecting densities to rectifiability. It should be noted that for general measures and sets 
tangents can be almost anything; see \cite{Buczolich2003, BuczolichRati2006, ChenRossi2015, 
Oneil1994, ONeil1995, Sahlsten2014}. However, under strong enough regularity assumptions, or when 
studying the tangents statistically, the tangent structure can describe the original object well. 
For example, recently Fraser \cite{Fraser2014} and Fraser, Henderson, Olson, and Robinson 
\cite{FraserHendersonOlsonRobinson2015} have used tangent sets to study dimensional properties of 
various fractal sets. This approach goes back to Mackay and Tyson \cite{MackayTyson2010} and Mackay 
\cite{Mackay2011}. Furthermore, the process of taking blow-ups of a measure or a set around a point 
induces a natural dynamical system. This makes it possible to apply ergodic-theoretical methods to 
understand the statistical behavior of tangents. The 
general theory related to this was initiated by Furstenberg \cite{Furstenberg1970, 
Furstenberg2008}. 
It was greatly developed by Hochman \cite{Hochman2010} and recently enhanced by K\"aenm\"aki, 
Sahlsten, and Shmerkin \cite{KaenmakiSahlstenShmerkin2013}. This ``zooming in'' dynamics has been 
considered for specific sets and measures arising from dynamics; see e.g.\ \cite{BedfordFisher1996, 
BedfordFisher1997, BedfordFisherUrbanski2002, FergusonFraserSahlsten2015, FraserPollicott2015, 
Gavish2011, Zahle1988}. The theory has found 
applications in arithmetics and geometric measure theory; see \cite{HochmanShmerkin2013, 
KaenmakiSahlstenShmerkin2014}.
Very recently, Kempton \cite{Kempton2015} studied the scenery flow of Bernoulli measures on 
self-affine sets associated to strictly positive matrices under the condition that projections of 
the self-affine measure
in typical directions are absolutely continuous.

The main purpose of this article is to investigate tangent sets of a self-affine set. A tangent set 
is a limit in the Hausdorff metric obtained from successive magnifications of the original set 
around a given point. These kind of local structures of fractal sets are of course interesting in 
their own right, but on top of that, our effort is motivated by the presumption that understanding 
them could provide new methods in the study of self-affine sets.

Tangent measures of self-similar sets have been given a satisfactory description by Bandt 
\cite{Bandt2001}. He studied tangent measures (and their distribution) of self-similar sets 
satisfying the open set condition, showing that almost every point has the same collection of 
tangent sets. Intuitively it is plausible that all tangent sets are homothetic copies of the 
original self-similar set. For a self-affine set, the expected tangent behaviour is completely 
different, and not yet very well understood. Under iteration by affine maps, balls are often mapped 
to narrower and narrower ellipses. Thus it is intuitive that, when zooming into a small ball, the 
magnification will contain narrow fibers in different directions. The limit object should hence be 
contained in a set consisting of long line segments, even if the original self-affine set is 
totally 
disconnected. Bandt and K\"aenm\"aki \cite{BandtKaenmaki2013} confirmed that this intuition is 
correct at least in the special case where all the mappings 
contract 
more in the vertical direction than in the horizontal direction. According to them, under a 
projection condition, tangent sets at generic points are product sets of a line and a perfect 
nowhere dense set. 

In the present article, we generalize the result of Bandt and K\"aenm\"aki to a general class of 
self-affine sets satisfying the strong separation condition and a projection condition. We emphasize 
that we do not need to assume the matrices in the affine iterated function system to be strictly 
positive. Therefore, in this sense, our setting is also more general than that of Kempton's. Some 
questions still remain open. It would be interesting to know when exactly the tangents of 
self-affine sets admit this kind of fibered tangent structure.

We begin the description of the setup by defining self-affine sets. Fix $m \in \N \setminus \{ 1 
\}$ 
and for each $i \in \{ 1,\ldots, m \}$ let $f_i \colon \R^2 \to \R^2$ be a contractive map with
\[
  f_i(x)=A_i x + b_i,
\]
where $A_i \in \R^{2 \times 2}$ is an invertible $2 \times 2$ matrix having operator norm strictly 
less than one, and $b_i\in \R^2$. The collection $\{f_1,\dots, f_m\}$ of affine mappings is an {\bf 
affine iterated function system}, and it gives rise to a {\bf self-affine set} $E$, which is the 
unique compact non-empty set satisfying 
\[
  E=\bigcup_{i=1}^m f_i(E).
\]
If the matrices $A_i$ are diagonal and the sets $f_i(E)$ are sufficiently separated, then the set 
$E$ is referred to as a {\bf self-affine carpet}. For example, if the {\bf strong separation 
condition} is satisfied, that is, $f_i(E) \cap f_j(E) = \emptyset$ for $i \ne j$, then there is a 
one-to-one correspondence between a point $x \in E$ and its address: the {\bf canonical projection} 
$\pi \colon \{ 1,\ldots,m \}^\N \to E$ defined by
\begin{equation} \label{eq:projection}
  \pi(i_1,i_2,\ldots) = \sum_{n=1}^\infty A_{i_1} \cdots A_{i_{n-1}} b_{i_n}
\end{equation}
is bijective.

It is clear that general results concerning tangent sets cannot be obtained for all points. 
Therefore we restrict our analysis to points which are generic with respect to Bernoulli measures. 
This is a natural class of measures to consider, since often the measure with maximal dimension is 
Bernoulli. It should be noted that a {\bf Bernoulli measure} $\nu_p$, defined to be the product of 
a 
given probability vector $p=(p_1,\ldots,p_m)$, is not a measure on $E$ but on $\{ 1,\ldots,m 
\}^\N$. 
Thus, precisely speaking, we consider generic points on $E$ with respect to the pushforward measure 
$\pi\nu_p$. But when the canonical projection is bijective we interpret $\nu_p$ as a measure on $E$.

Here we shall only state an informal version of our main theorem, as some of the assumptions are 
too 
technical to be presented in the introduction. A precise formulation of the theorem can be found in 
Theorem \ref{main}. We refer the reader to Remarks \ref{re:ontheassumptions2} and 
\ref{re:ontheassumptions} for sufficient and checkable conditions for the assumptions. Definitions 
of a tangent set and porosity are given in \S \ref{sec:zooming}. Lyapunov exponents will be defined 
in 
\S\ref{sec:orientation}; notice that their values depend on the Bernoulli measure. The closed unit 
ball is denoted by $B(0,1)$. 

\begin{informal}
  If a self-affine set $E$
  \begin{itemize}
  \item[(1)] satisfies the strong separation condition,
  \item[(2)] projects to a line segment for sufficiently many directions, 
  \item[(3)] has two distinct Lyapunov exponents,
  \end{itemize}
  then for $\nu_p$-almost every $x\in E$ there exists a rotation $\OO$ such that the tangent sets 
at 
$x$ are either of the form $\OO((\R\times C) \cap B(0,1))$, where $C$ is a closed porous set, or of 
the form $\OO((\ell\times \{0\}) \cap B(0,1))$, where $\ell$ is an interval containing at least one 
of the intervals $[-1,0]$ and $[0,1]$.
\end{informal}

It is reasonable to assume the strong separation condition since the geometry of the limit set for 
an overlapping iterated function system can differ hugely from the non-overlapping case. Further, 
the projection condition \eqref{as:projection} is necessary for the claim in this form to hold; see 
Example \ref{ex:noprojection}. The assumption on two distinct Lyapunov exponents guarantees that 
the 
system is ``strictly affine" -- this kind of assumption does not seem too restrictive since the 
expected behavior in the self-similar case is so different to the self-affine case. There could of 
course be a general theorem covering a wider class of iterated function systems, but even to guess 
for the statement of such a theorem seems very difficult. 

Let us now compare the assumptions of this result to the setting of Bandt and K\"aenm\"aki 
\cite{BandtKaenmaki2013}. They considered self-affine carpets on $[0,1]^2$ for which all the maps 
of 
iterated function systems contract more vertically than horizontally. They required the rectangular 
strong separation condition and also assumed that the projection of the carpet onto the $x$-axis is 
the whole interval $[0,1]$. Our assumptions (1)--(3) resemble their assumptions, but we can handle 
more general self-affine sets. In fact, our main result, Theorem \ref{main}, contains the 
self-affine carpets as a special case, regardless of whether there is a geometrically dominant 
contraction direction or not. For a discussion on this and other examples, see \S 
\ref{sec:discussion}. To finish with a concrete statement, we formulate a corollary which strictly 
generalizes the main result of \cite{BandtKaenmaki2013}. The proof is given in \S 
\ref{sec:discussion}.

\begin{corollary} \label{cor:carpet}
  Suppose that $\{ f_1,\ldots,f_m \}$ is an affine iterated function system defined on $[0,1]^2$ 
such that the linear part of $f_i$ is the diagonal matrix $\diag(h_i,v_i)$ with $0<h_i,v_i<1$ for 
all $i \in \{ 1,\ldots,m \}$ and
  \begin{enumerate}
    \item $f_i([0,1]^2) \cap f_j([0,1]^2) = \emptyset$ for $i \ne j$,
    \item \label{carpet_line} $\#\{ i \in \{ 1,\ldots,m \} : \{ c \} \times [0,1] \cap f_i([0,1]^2) 
\ne \emptyset \} \ge 2$ for all $c \in [0,1]$,
    \item \label{cor:carpet lyapunov} $-\sum_{i=1}^m \nu_p(f_i([0,1]^2))\log h_i < -\sum_{i=1}^m 
\nu_p(f_i([0,1]^2))\log v_i$.
  \end{enumerate}
  Then the tangent sets at $\nu_p$-almost every point of the associated self-affine carpet $E$ are 
of the form $(\R \times C) \cap B(0,1)$, where $C$ is a perfect porous set.
\end{corollary}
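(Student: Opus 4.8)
The plan is to deduce Corollary~\ref{cor:carpet} from the main theorem (Theorem~\ref{main}) by checking that a self-affine carpet satisfying hypotheses (1)--(3) of the corollary fits into the framework of the main theorem, and then upgrading the conclusion from ``closed porous'' to ``perfect porous'' and eliminating the degenerate (interval) alternative. First I would verify the translation of hypotheses. Hypothesis (1) of the corollary, together with the nesting $f_i(E) \subseteq f_i([0,1]^2)$, immediately gives the strong separation condition for $E$, which is assumption~(1) of the informal theorem. For the projection condition, I would show that projecting $[0,1]^2$ orthogonally onto the $x$-axis and using condition~\eqref{carpet_line}, the self-affine carpet $E$ projects \emph{onto} the whole segment $[0,1]$ in the horizontal direction; more is true, namely that this holds for a whole cone (equivalently, ``sufficiently many'') of directions near the horizontal, because each $f_i([0,1]^2)$ is a nondegenerate axis-parallel rectangle and the union of such rectangles, arranged to cover every vertical line $\{c\}\times[0,1]$ at least twice, continues to project onto an interval under small perturbations of the projection direction. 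This is precisely the projection condition~\eqref{as:projection} required in Theorem~\ref{main}, and the ``at least twice'' part is exactly what is needed for the genuinely fibered (rather than degenerate) alternative. Finally, hypothesis~\eqref{cor:carpet lyapunov} says that the expected value of $-\log h_i$ is strictly smaller than that of $-\log v_i$ under the pushforward of $\nu_p$; since for diagonal matrices $\diag(h_i,v_i)$ the singular values are exactly $|h_i|$ and $|v_i|$, the two Lyapunov exponents of the cocycle are $\sum_i \nu_p(f_i([0,1]^2))\log h_i$ and $\sum_i \nu_p(f_i([0,1]^2))\log v_i$, which are distinct by~\eqref{cor:carpet lyapunov}; this is assumption~(3). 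Moreover, because the matrices are diagonal, the Oseledets/contracting direction is deterministically the $x$-axis at every point, so the rotation $\OO$ produced by the main theorem can be taken to be the identity.

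With the hypotheses verified, Theorem~\ref{main} yields that for $\nu_p$-almost every $x \in E$ every tangent set is either of the form $(\R \times C) \cap B(0,1)$ with $C$ closed and porous, or of the form $(\ell \times \{0\}) \cap B(0,1)$ with $\ell$ an interval containing $[-1,0]$ or $[0,1]$. I would rule out the second (degenerate) case by a separation argument: since the vertical contraction is genuinely present and $E$ sits inside $[0,1]^2$ with the pieces $f_i(E)$ disjoint, zooming in along a $\nu_p$-typical orbit cannot asymptotically flatten $E$ onto a single horizontal line --- the relevant observation is that at every scale the reconstructed rescaled copy still meets at least two horizontally-stacked cylinders coming from~\eqref{carpet_line}, so the vertical ``spread'' of the tangent is bounded below and the tangent has nonempty interior in the vertical direction; equivalently $C$ is nonempty and cannot be a single point. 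The argument here essentially re-runs the reasoning already used in the proof of the main theorem, restricted to the carpet case, so I expect it to be short.

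The remaining point is to promote ``closed porous'' to ``perfect porous,'' i.e.\ to show $C$ has no isolated points. For this I would again exploit condition~\eqref{carpet_line}: every vertical line through the carpet meets at least two of the first-level rectangles, hence --- iterating --- through every point of $E$ and at every scale there are at least two vertically separated subcylinders, so no point of $E$, and after rescaling no point of the fiber $C$, can be isolated; the usual argument is that an isolated point of a tangent set would force, at some bounded scale along the defining sequence, a corresponding piece of $E$ to be isolated within a neighbourhood, contradicting~\eqref{carpet_line} applied inside that piece (self-affinity makes~\eqref{carpet_line} scale-invariant). Combining, $C$ is a perfect porous set, giving the stated conclusion.

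I expect the main obstacle to be the projection condition: one must confirm that hypothesis~\eqref{carpet_line}, which is phrased only for the vertical lines $\{c\}\times[0,1]$, really implies the ``sufficiently many directions'' hypothesis~\eqref{as:projection} of Theorem~\ref{main} rather than just surjectivity of the single horizontal projection; this requires an open-cone argument using the axis-parallel rectangular structure of the pieces and a compactness argument over $c\in[0,1]$ to get uniformity, and a little care is needed to see that ``at least two pieces over every vertical line'' survives a small tilt of the projection direction. Everything else --- identifying the Lyapunov exponents with $\sum_i \nu_p(f_i([0,1]^2))\log h_i$ and $\sum_i \nu_p(f_i([0,1]^2))\log v_i$, fixing $\OO=\mathrm{id}$, and the perfectness upgrade --- is a direct specialization of the general machinery.
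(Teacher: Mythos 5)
Your overall route is the paper's: check the hypotheses of Theorem \ref{main}, identify the two Lyapunov exponents with $-\sum_i\nu_p(f_i([0,1]^2))\log h_i$ and $-\sum_i\nu_p(f_i([0,1]^2))\log v_i$, use condition \eqref{carpet_line} both to kill the degenerate line-segment alternative (the paper packages this as the ``line condition'' and Proposition \ref{cor:nopoint}) and to upgrade closed porous to perfect, and take $\OO=\mathrm{id}$ because the invariant singular directions are the coordinate axes. The one point to correct is what you single out as ``the main obstacle.'' The projection condition \eqref{as:projection} is \emph{pointwise}: it only concerns projections onto the single direction $\overline\vartheta_1(\iii)$, which, by condition \eqref{cor:carpet lyapunov} and Lemma \ref{Oselemma}, is $\nu_p$-a.s.\ the horizontal axis in the carpet case (this is exactly Remark \ref{re:ontheassumptions}). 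So no open-cone or tilted-direction argument is needed, and it is just as well: what you would have to tilt is the projection of the \emph{set} $E$ and of all its cylinders, not of the covering rectangles, and condition \eqref{carpet_line} does not obviously survive a tilt since the maps $f_i^{-1}$ are non-conformal and change the tilt angle. What you do need for the horizontal direction is the iteration argument of Remark \ref{re:ontheassumptions}: from ``every vertical line over $[0,1]$ meets at least one first-level rectangle'' one pulls the line back by $f_{j_1}^{-1}$, finds another piece it meets, and so on, producing a point of $E$ on every vertical line; this gives that $\proj_x(E)=[0,1]$ and hence, since every cylinder is a diagonal-affine image of $E$, that all cylinder projections are segments. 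With that replacement your argument is complete and matches the paper's, including the appeal to the Bandt--K\"aenm\"aki perfectness argument at the end.
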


\section{Preliminaries}

In this section and throughout the article, the affine iterated function system $\{f_1, \dots, 
f_m\}$ and hence the self-affine set $E$ remain fixed. Further, $\nu_p$ is the Bernoulli measure 
corresponding to a probability vector $p=(p_1,\ldots,p_m)$. We let $\overline p = \max_{i \in \{ 
1,\ldots,m \} } p_i$ and $\underline p = \min_{i \in \{ 1,\ldots,m \} } p_i$.

\subsection{Symbolic space} We begin by presenting the symbolic representation of points in $E$. 
Let 
$I=\{1,\dots, m\}$ be the collection of letters. Define $\IN$ to be the collection of infinite 
words 
$\{1,\dots,m\}^\N$ and $I^k$ to be the set of finite words $\{1,\dots,m\}^k$ for all $k$. The empty 
word is denoted by $\varnothing$. Let $I^*=\bigcup_{k=0}^\infty I^k$ and denote by bold letters
$\ii,\jj, \aa, \uu$ and so on the finite or infinite words. For a finite word $\ii$, define the 
{\bf 
cylinder}
\[
  [\ii]=\{\jj\in\IN : \jjn{|\ii|}=\ii\},
\]
where $\jj|_k$ are the first $k$ letters of $\jj$ and $|\ii|$ is the length of the word $\ii$. We 
say that finite words $\ii$ and $\jj$ are {\bf incomparable}, $\ii\perp\jj$, if $[\iii] \cap [\jjj] 
= \emptyset$. Denote by $\ii(j)$ the $j$-th letter in $\ii$. For any finite word $\ii$, write 
$A_\ii=A_{\ii(1)} \cdots A_{\ii(|\ii|)}$ and $f_\ii=f_{\ii(1)}\circ\cdots\circ f_{\ii(|\ii|)}$. We 
refer to the images $f_\ii(E)$ as level $|\ii|$ {\bf construction cylinders} and denote them by 
$E_\ii$. Notice that $E_\ii = \pi([\ii])$, where $\pi$ is the canonical projection defined in 
\eqref{eq:projection}.

\subsection{Orientation of construction cylinders} \label{sec:orientation}
In this subsection, we study the orientation of the construction cylinders $E_\ii$. In particular, 
in Lemma \ref{Oselemma} we prove that they converge almost everywhere. This is a version of the 
Oseledets' theorem; see \cite[Theorem 10.2]{Walters1982} or \cite{Ruelle1979}. For the convenience 
of the reader, and since the statement does not seem to appear in the right form in the literature, 
we present a full proof.

Let us now make the above more precise. For each $\ii\in I^*$ and $k\in\{1,2\}$, let 
$\eta_k(\ii)\in 
S^1=\{x\in \R^2 : |x|=1\}$ be the eigenvectors of $A_{\ii}^T A_{\ii}$ with eigenvalues 
$\alpha_k^2(\ii)$ ordered so that $\alpha_1(\ii)\geq\alpha_2(\ii)$. We call $\eta_k(\ii)$ the {\bf 
singular vectors}, or {\bf singular directions}, of $A_{\ii}$ and $\alpha_k(\ii)$ the {\bf singular 
values} of $A_{\ii}$. From basic linear algebra it follows that
\[
 \alpha_1(\ii)= \| A_{\ii}\eta_1(\ii) \| =\max_{v\in S^1} \|A_\ii v\|
 \qquad\text{and}\qquad
 \alpha_2(\ii)= \| A_{\ii}\eta_2(\ii) \| =\min_{v\in S^1}\|A_\ii v\|.
\]
Furthermore, if $\alpha_1(\ii)>\alpha_2(\ii)$ we see that $\eta_1(\ii)$ is orthogonal to 
$\eta_2(\ii)$ and $A_{\ii}\eta_1(\ii)$ is orthogonal to $A_{\ii}\eta_2(\ii)$. The numbers 
$\alpha_1(\ii)$ and $\alpha_2(\ii)$ are the lengths of the principal semiaxes of the image of the 
unit ball $f_\ii(B(0,1))$. We write $\la=\min_{i \in \{ 1,\ldots,m \}} \alpha_2(i)$ and 
$\ua=\max_{i 
\in \{ 1,\ldots,m \}} \alpha_1(i)$. Since the matrices $A_i$ are invertible contractions we have 
$0<\la\leq \ua<1$.

Note that when $\alpha_1(\ii)=\alpha_2(\ii)$, the eigenspace of $A_\ii^T A_\ii$ is the whole 
$\R^2$. 
In this case, we can choose $\eta_1(\ii)$ and $\eta_2(\ii)$ to be any orthogonal vectors. Also when 
$\alpha_1(\ii) > \alpha_2(\ii)$, the sign of $\eta_k(\ii)$ can be chosen freely. This will not make 
a significant difference in what follows. For each $k \in \{ 1,2 \}$ and all finite words $\ii$, let
 \[
  \vartheta_k(\ii)=\frac{A_{\ii}\eta_k(\ii)}{\| A_{\ii}\eta_k(\ii) \|}. 
 \]
The pair $(\vartheta_1(\ii), \vartheta_2(\ii))$ gives the directions of the principal semiaxes of 
the ellipse $f_\ii(B(0,1))$, and hence the ``orientation'' of the construction cylinders $E_\ii$. 

For each $k \in \{ 1,2 \}$ and $\ii \in \IN$, we define a sequence by setting
\begin{equation}\label{eq:sequences}
\lambda_k(\iin{n})=-\frac{1}{n}\log \alpha_k(\iin{n}).
\end{equation}
Due to the subadditive ergodic theorem (see \cite[Theorem 10.1]{Walters1982} or \cite{Ruelle1979}), 
the {\bf Lyapunov exponents} $\lambda_k(\ii)=\lim_{n\to\infty}\lambda_k(\iin{n})$ exist for 
$\nu_p$-almost all $\ii\in \IN$ and for all $k \in \{ 1,2 \}$. Moreover, since Bernoulli measures 
are ergodic (for example, see \cite[Theorem 3.7]{KaenmakiReeve2014}) there exists a constant 
$\lambda_k$ so that $\lambda_k(\ii)=\lambda_k$ for $\nu_p$-almost all $\iii \in I^\N$ and for all 
$k 
\in \{ 1,2 \}$. Note that we always have $\lambda_2\geq\lambda_1$.

\begin{lemma} \label{Oselemma}
  If $\lambda_2>\lambda_1$, then for each $k \in \{ 1,2 \}$ and for $\nu_p$-almost every $\ii\in 
\IN$ there exists $\overline{\vartheta}_k(\ii)\in S^1$ so that 
$\vartheta_k(\iin{n})\to\overline{\vartheta}_k(\ii)$ as $n\to\infty$.
\end{lemma}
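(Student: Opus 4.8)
The plan is to prove that the unit singular directions $\vartheta_k(\iin{n})$ form a Cauchy sequence in $S^1$ for $\nu_p$-almost every $\ii$, by showing that consecutive terms $\vartheta_k(\iin{n})$ and $\vartheta_k(\iin{n+1})$ get exponentially close. The heuristic is that $A_{\iin{n+1}} = A_{\iin{n}} A_{\ii(n+1)}$ maps the unit ball to an ellipse whose major/minor axes are, by the assumption $\lambda_2 > \lambda_1$, separated by a factor that grows like $\mathrm{e}^{-n(\lambda_1 - \lambda_2)} \to 0$ in the ratio $\alpha_2(\iin{n})/\alpha_1(\iin{n})$; since post-composing with the bounded-distortion map $A_{\ii(n+1)}$ (whose norm and conorm are controlled by $\la,\ua$) perturbs $A_{\iin{n}}$ by a bounded multiplicative amount, the singular directions of the product can only rotate by an amount comparable to that ratio. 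Thus $\angle(\vartheta_k(\iin{n+1}),\vartheta_k(\iin{n})) \lesssim (\alpha_2(\iin{n})/\alpha_1(\iin{n})) = \mathrm{e}^{-n(\lambda_2-\lambda_1) + o(n)}$ almost surely, which is summable, giving a Cauchy sequence and hence the limit $\overline\vartheta_k(\ii)$.

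To make this rigorous I would first establish a quantitative linear-algebra lemma: if $B$ is an invertible $2\times 2$ matrix with singular values $\beta_1 \ge \beta_2$ and $C$ is invertible with $c \le \|Cv\| \le c'$ for all $v \in S^1$ (so $c' / c \le \ua/\la$ say, fixed), then the angle between the leading left-singular direction of $BC$ and that of $B$ is bounded by a constant (depending only on $c'/c$) times $\beta_2/\beta_1$. This can be proved by writing the action on the circle, or by using the fact that $\vartheta_1(B)$ is the direction maximizing $\|B\cdot\|$ and comparing the Rayleigh-type quotients; alternatively one passes to the associated Möbius/projective action on $\mathbb{RP}^1$ and notes that a matrix with singular-value ratio $\rho$ contracts the projective (Hilbert) metric on the complement of a $\rho$-neighborhood of the repelling direction by a factor $\asymp \rho^2$, which makes the orbit of directions Cauchy. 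I would use whichever packaging is cleanest; the projective-contraction viewpoint is probably the shortest.

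Next I would combine this with the Lyapunov data. By the subadditive/Furstenberg machinery already invoked in the text, for $\nu_p$-almost every $\ii$ we have $-\tfrac1n\log\alpha_1(\iin{n}) \to \lambda_1$ and $-\tfrac1n\log\alpha_2(\iin{n}) \to \lambda_2$, so $\alpha_2(\iin{n})/\alpha_1(\iin{n}) = \mathrm{e}^{-n(\lambda_2-\lambda_1)+o(n)}$. Fix such an $\ii$, and fix $\delta \in (0,\lambda_2-\lambda_1)$; there is $N(\ii)$ with $\alpha_2(\iin{n})/\alpha_1(\iin{n}) \le \mathrm{e}^{-n\delta}$ for $n \ge N(\ii)$. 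Applying the linear-algebra lemma with $B = A_{\iin{n}}$ and $C = A_{\ii(n+1)}$ (whose conorm and norm lie in $[\la,\ua]$, so the distortion bound is uniform) yields $\angle(\vartheta_k(\iin{n+1}),\vartheta_k(\iin{n})) \le \mathrm{const}\cdot\mathrm{e}^{-n\delta}$ for $n \ge N(\ii)$, for $k=1$; the $k=2$ case follows because $\vartheta_2$ is orthogonal to $\vartheta_1$ (in the regime $\alpha_1 > \alpha_2$, which holds for all large $n$ since the ratio tends to $0$). Summing the geometric tail shows $(\vartheta_k(\iin{n}))_n$ is Cauchy in the compact metric space $S^1$, hence convergent to some $\overline\vartheta_k(\ii) \in S^1$. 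Measurability of $\ii \mapsto \overline\vartheta_k(\ii)$ is automatic as a pointwise limit of continuous functions.

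The main obstacle is the quantitative linear-algebra lemma — controlling how much the leading singular direction of a product moves when one multiplies on the right by a bounded-distortion factor, with the bound proportional to the singular-value gap $\beta_2/\beta_1$ rather than just $o(1)$. One has to be careful about the degenerate case $\beta_1 = \beta_2$ (where singular directions are undefined), but almost surely we are eventually in the strict regime, so this only affects finitely many terms and does not obstruct the Cauchy conclusion. A secondary, more bookkeeping-type point is passing from $\vartheta_k(\iin{n})$, defined via $A_{\iin{n}}\eta_k(\iin{n})$, to a statement purely about left-singular vectors of $A_{\iin{n}}$ and verifying that the sign ambiguity in $\eta_k$ can be resolved consistently so that the sequence genuinely converges rather than oscillating between antipodal points; this is handled by choosing the signs once $n$ is large and noting consecutive directions are close, hence on the same hemisphere.
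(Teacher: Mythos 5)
Your proposal is correct and follows essentially the same route as the paper: bound the angle between consecutive directions $\vartheta_k(\iin{n})$ and $\vartheta_k(\iin{n+1})$ by a constant times $\alpha_2(\iin{n})/\alpha_1(\iin{n})$, note this decays exponentially $\nu_p$-almost everywhere because $\lambda_2>\lambda_1$, and sum to get a Cauchy sequence. The ``quantitative linear-algebra lemma'' you isolate as the main obstacle is exactly the step the paper carries out, by observing that the $\vartheta_k(\iin{n})$ are the singular directions of $A_{\iin{n}}^{T}$ and sandwiching $\|A_{\iin{n+1}}^{T}\vartheta_2(\iin{n})\|$ between $|\sin(\theta_n)|\,\la\,\alpha_1(\iin{n})$ and $\ua\,\alpha_2(\iin{n})$, which yields $|\sin(\theta_n)|\le(\ua/\la)\,\alpha_2(\iin{n})/\alpha_1(\iin{n})$ directly.
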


\begin{proof}
 First we notice that $\vartheta_k(\iin{n})$ are the singular directions of $A_{\iin{n}}^T$. This 
is 
true, since
 \begin{equation*}
  A_{\iin{n}} A_{\iin{n}}^T \vartheta_k(\iin{n})
 = A_{\iin{n}} \frac{ A_{\iin{n}}^T A_{\iin{n}}\eta_k(\iin{n})}{ \|A_{\iin{n}}\eta_k(\iin{n})\| }
  = \alpha_k(\iin{n})^2 \frac{ A_{\iin{n}}\eta_k(\iin{n}) }{ \|A_{\iin{n}}\eta_k(\iin{n})\| 
}=\alpha_k(\iin{n})^2\vartheta_k(\iin{n}).
 \end{equation*}
 By the orthogonality of $\vartheta_1(\iin{n})$ and $\vartheta_2(\iin{n})$, it suffices to show the 
convergence in the case $k=2$. For each $n\in\N$, let $\theta_n$ denote the angle between 
$\vartheta_2(\iin{n})$ and $\vartheta_2(\iin{n+1})$. We prove that $(\theta_n)$ is a Cauchy 
sequence. This yields the existence of the limit. Since $\vartheta_1(\iin{n+1})$ and 
$\vartheta_2(\iin{n+1})$ are orthogonal for all $n$, we can write
 \[
  \vartheta_2(\iin{n})= \sin(\theta_n) \vartheta_1(\iin{n+1})+ \cos(\theta_n) 
\vartheta_2(\iin{n+1}).
 \]
 Note that, by changing the sign if necessary, we can always choose $\vartheta_2(\iin{n})$ and 
$\vartheta_2(\iin{n+1})$ so that $-\pi/2\leq \theta_n \leq \pi/2$. Now we have
 \begin{align*}
  \|A_{\iin{n+1}}^T\vartheta_2(\iin{n})\|
  &= \|A_{\iin{n+1}}^T \sin(\theta_n) \vartheta_1(\iin{n+1})+ A_{\iin{n+1}}^T \cos(\theta_n) 
\vartheta_2(\iin{n+1})\| \\
  &\geq \|A_{\iin{n+1}}^T \sin(\theta_n) \vartheta_1(\iin{n+1})\| = |\sin(\theta_n)| 
\alpha_1(\iin{n+1})\\
  &\ge |\sin(\theta_n)|\la\alpha_1(\iin{n})
 \end{align*}
 and
 \begin{equation*}
  \|A_{\iin{n+1}}^T\vartheta_2(\iin{n})\|
  =\|A_{\ii(n+1)}^T A_{\iin{n}}^T\vartheta_2(\iin{n})\| \leq \alpha_1(\iii(n+1)) 
\|A_{\iin{n}}^T\vartheta_2(\iin{n})\| \leq \ua\alpha_2(\iin{n}).
 \end{equation*}
 To prove the claim we show that $\sum_{n=1}^\infty |\theta_n|$ converges by applying the root 
test. 
Since we have $|\sin(\theta_n)| \leq (\ua/\la) \alpha_2(\iin{n})/\alpha_1(\iin{n})$, it suffices to 
show that
 \[
  \limsup_{n\to\infty}\sqrt[n]{\alpha_2(\iin{n})/\alpha_1(\iin{n})}<1.
 \]
 This is equivalent to $\liminf_{n\to\infty}-\frac{1}{n} \log 
\alpha_2(\iin{n})/\alpha_1(\iin{n})>0$, which is true by our assumption $\lambda_2>\lambda_1$.
\end{proof}

\begin{remark}
  It should be remarked that in the Oseledets' theorem, compared to the setting of the iterated 
function systems, the matrices are iterated in the reverse order. This is why Lemma \ref{Oselemma} 
gives the convergence of the images of the singular directions, $\vartheta_k(\iii|_n)$, while the 
Oseledets' theorem concerns the singular directions $\eta_k(\iii|_n)$.
  
  The following example shows that in our setting, we can not expect the convergence of the 
singular 
directions in a set of positive measure. Let $\{A_i x + b_i\}_{i\in I}$ be an affine iterated 
function system. Suppose that for some $\jj\in I^*$ there exists $0<\alpha<1$ such that 
$A_\jj=\alpha R$, where $R$ is a rotation of angle $\theta\neq \pi$. Let $\mu$ be an ergodic 
measure 
on $\IN$ so that $\lambda_2>\lambda_1$ and $\mu[\jj]>0$. For any $\ii\in I^*$ the direction 
$\eta_1(\ii)$ differs from $\eta_1(\ii\jj)$ exactly by angle $\theta$. From \cite[Lemma 
2.3]{KaenmakiVilppolainen2008} we see that the set where $\jj$ occurs infinitely often is of full 
measure, so the set where $\eta_1(\iin{n})$ converges is of measure zero.
\end{remark}

\subsection{Zooming and patterns} \label{sec:zooming}
For $\ii\in \IN$ and $0<t<1$, let $n=n(\pi(\ii),t)$ be the largest integer for which the closed 
ball 
$B(\pi(\ii),t)$ only intersects one level $n$ construction cylinder $E_{\iin{n}}$. The existence of 
such $n$ is guaranteed by the strong separation condition. For reasons that will soon become 
apparent, this is called the {\bf construction level of the zoom}. It is easy to see that 
$n(\pi(\ii),t)$ increases as $t$ decreases to zero. We also abbreviate $n(\pi(\ii), t)$ by 
$n(\ii,t)$ and $\iin{n(\ii,t)}$ by $\iin{t}$.

For any vector $w\in \mathbb R^2 \setminus \{ 0 \}$, let $\proj_w$ be the orthogonal projection 
onto 
the line $\{tw : t\in \mathbb R\}$. Let $\jj\in I^*$. We use $h_{\iin{t}}(\jj)$ to denote the 
diameter of the projection $\proj_{\vartheta_1(\iin{t})} (E_{\iin{t}\jj})$. Similarly, 
$v_{\iin{t}}(\jj)$ is used to denote the diameter of $\proj_{\vartheta_2(\iin{t})}(E_{\iin{t} 
\jj})$. In the course of the proofs the construction cylinders will typically be written in the 
{\bf 
singular basis} $(\vartheta_1(\iin{t}),\vartheta_2(\iin{t}))$, turning the directions 
$\vartheta_1(\iin{t})$ and $\vartheta_2(\iin{t})$ horizontal and vertical, respectively, which is 
what the notations $h$ and $v$ stand for. 

\begin{figure}[t]
\includegraphics{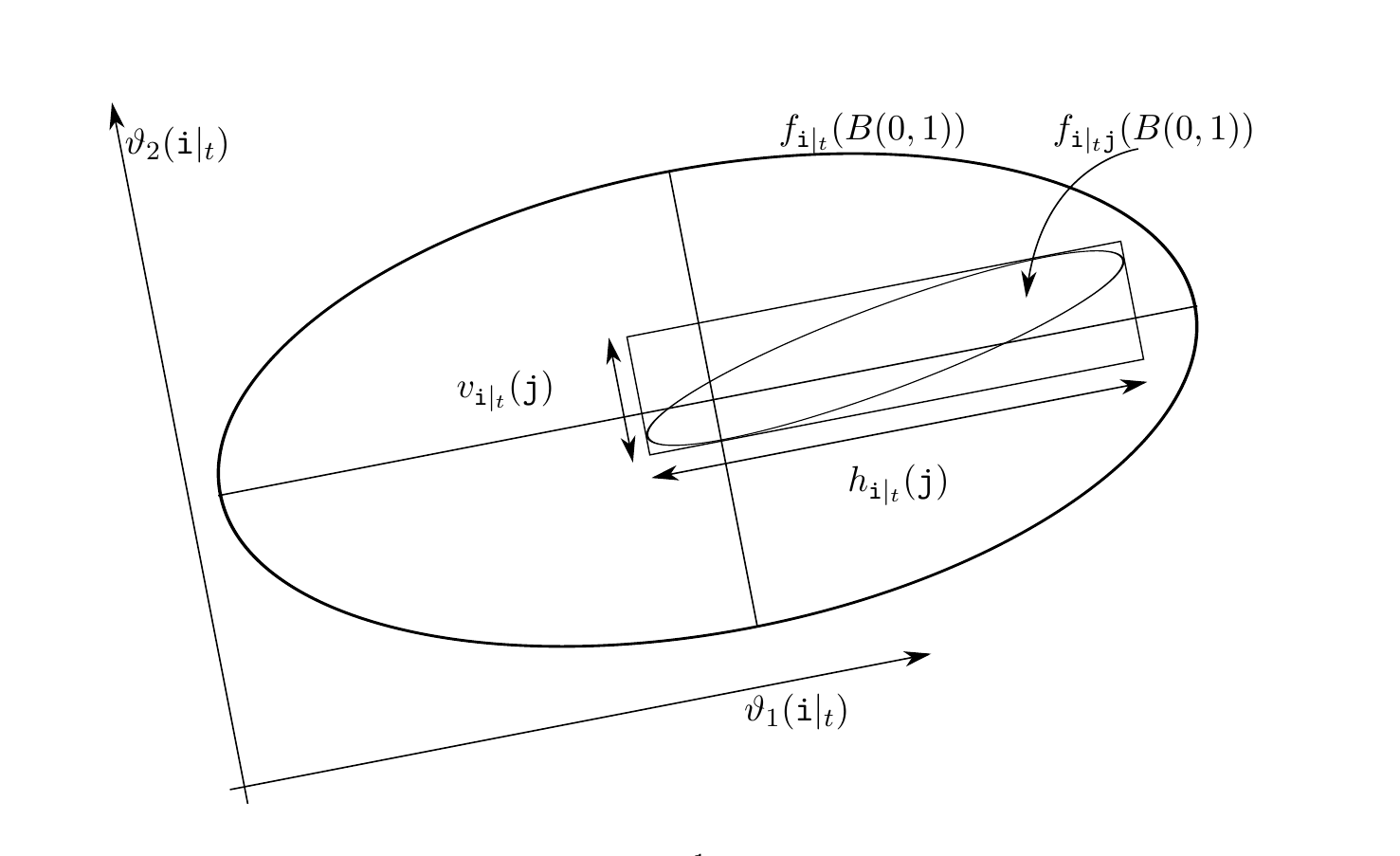}
\caption{The construction rectangle $R_{\iii|_t}(\jjj)$ is depicted in the picture. For 
illustrative 
purposes, the construction cylinders have been replaced by ellipses.}
\label{ellipsi}
\end{figure}

Notice that the construction cylinder $E_{\iin{t} \jj}$ is included in a closed rectangle with 
sides 
parallel to $\vartheta_1(\iin{t})$ and $\vartheta_2(\iin{t})$ and of side-lengths 
$h_{\iin{t}}(\jj)$ 
and $v_{\iin{t}}(\jj)$; see Figure \ref{ellipsi}. This {\bf construction rectangle} is denoted by 
$R_{\iin{t}}(\jj)$. For the empty word $\varnothing$ we use notation $R_{\iin{ t}}$. The next lemma 
highlights the relationship between the side-lengths of $R_{\iin{t}}(\jj)$ and the singular values 
of $A_{\iin{t}}$.

\begin{lemma} \label{heigthandlength}
If the self-affine set $E$ is not contained in a line, then there is a constant $L\in \mathbb N$ 
such that
\begin{equation*}
 L^{-1} \alpha_1(\iin{t})\la^{|\jj|} \leq h_{\iin{t}}(\jj)
 \qquad\text{and}\qquad
 v_{\iin{t}}(\jj) \leq \alpha_2(\iin{t})\ua^{|\jj|} L
\end{equation*}
for all $\ii\in\IN$, $\jj\in I^*$, and $t>0$. 
\end{lemma}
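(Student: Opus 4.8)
The plan is to compare the construction rectangle $R_{\iin t}(\jj)$ with the ellipse $f_{\iin t\jj}(B(0,1))$, whose semiaxes have lengths $\alpha_1(\iin t\jj)$ and $\alpha_2(\iin t\jj)$ in the directions $\vartheta_1(\iin t\jj)$ and $\vartheta_2(\iin t\jj)$. The key observation is that $E_{\iin t\jj}=f_{\iin t}(E_\jj)$ and $E_\jj$ is a self-affine set not contained in a line (this follows from the hypothesis on $E$, since $f_\jj$ is an affine bijection and $E_\jj\subseteq E$ with $E=\bigcup f_\jj(E)$—I should check this reduction carefully, perhaps phrasing it as: $E_\jj$ has diameter comparable to $\|A_\jj\|$ and also has "width" bounded below in every direction because it contains affine copies $f_{\jj i}(E)$ of $E$ that are not collinear). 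Concretely, since $E$ is not contained in a line, there exist three points in $E$ spanning a triangle of positive area; pick a finite word so that their images under the corresponding $f$'s still span a triangle, and use this to get a lower bound, uniform over $\jj\in I^*$, for the diameter of $\proj_w(E_\jj')$ for a controlled suffix—this gives a constant $c>0$ with $\diam\proj_w(A_\jj(E))\ge c\,\alpha_2(\jj)$ for all unit $w$ and all $\jj\in I^*$.

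With that geometric input, I would argue as follows. For the \emph{upper} bound: $E_{\iin t\jj}\subseteq f_{\iin t}\bigl(f_\jj(B_E)\bigr)$ where $B_E$ is a ball of radius $\roo_E=\diam E$ containing $E$; hence $E_{\iin t\jj}$ lies in the ellipse $A_{\iin t\jj}(B(0,\roo_E))+(\text{translation})$, so its projection onto \emph{any} direction has diameter at most $2\roo_E\,\alpha_1(\iin t\jj)\le 2\roo_E\,\alpha_1(\iin t)\,\alpha_1(\jj)\le 2\roo_E\,\alpha_1(\iin t)\,\ua^{|\jj|}$. Wait—that bounds $h$, not $v$. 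For $v_{\iin t}(\jj)$ I instead want to project onto $\vartheta_2(\iin t)$, and here I use submultiplicativity in the form $\|A_{\iin t\jj}^T v\|\le\|A_{\jj}^T\| \,\|A_{\iin t}^T\vartheta_2(\iin t)\|$-type estimates: the image $A_{\iin t\jj}(B(0,\roo_E))$ projected onto $\vartheta_2(\iin t)$ has diameter at most $2\roo_E\,\|A_{\iin t\jj}^T\vartheta_2(\iin t)\|$, and $\|A_{\iin t\jj}^T\vartheta_2(\iin t)\|=\|A_\jj^T A_{\iin t}^T\vartheta_2(\iin t)\|\le\alpha_1(\jj)\,\alpha_2(\iin t)\le\ua^{|\jj|}\alpha_2(\iin t)$, using that $A_{\iin t}^T\vartheta_2(\iin t)$ points along the short axis of $A_{\iin t}^T$ (proved in the proof of Lemma~\ref{Oselemma}). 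Absorbing $2\roo_E$ into $L$ gives the right-hand inequality. For the \emph{lower} bound on $h_{\iin t}(\jj)$: by the geometric input above applied to $E_\jj=f_\jj(E)$, there are two points $x,y\in E$ with $|\proj_w\bigl(A_\jj(x-y)\bigr)|\ge c\,\alpha_2(\jj)\ge c\,\la^{|\jj|}$ for every unit $w$; push forward by $f_{\iin t}$ to get two points in $E_{\iin t\jj}$ whose difference is $A_{\iin t}A_\jj(x-y)$, and project onto $\vartheta_1(\iin t)$; then $|\proj_{\vartheta_1(\iin t)}A_{\iin t}(z)|\ge\alpha_2(\iin t)\,|\proj_{?}z|$ is the wrong direction again—rather, $A_{\iin t}$ maps the direction $\vartheta_1(\iin t)$-preimage (namely $\eta_1(\iin t)$) to $\vartheta_1(\iin t)$ with stretch $\alpha_1(\iin t)$, so decompose $A_\jj(x-y)$ in the $\eta_1(\iin t),\eta_2(\iin t)$ basis; the worst case is when $A_\jj(x-y)$ is nearly parallel to $\eta_2(\iin t)$, giving only $\alpha_2(\iin t)$. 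That is still not $\alpha_1(\iin t)$. The fix is to use \emph{two} suitably chosen pairs of points in $E_\jj$ whose difference vectors are not parallel: since $E_\jj$ is not in a line it contains points $x_0,x_1,x_2$ spanning a nondegenerate triangle with $|x_i-x_j|\ge c\,\alpha_2(\jj)$ for all pairs (again from the geometric input), so for at least one pair the vector $A_{\iin t}A_\jj(x_i-x_j)$ has a component of size $\ge c'\alpha_1(\iin t)\la^{|\jj|}$ along $\vartheta_1(\iin t)$—because a triangle of "diameter $\ge\delta$ in every direction" cannot have all three edges nearly orthogonal to $\eta_1(\iin t)$. This yields $h_{\iin t}(\jj)\ge L^{-1}\alpha_1(\iin t)\la^{|\jj|}$.

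The main obstacle is the geometric input: establishing, uniformly over all $\jj\in I^*$, that $E_\jj=f_\jj(E)$ has "thickness at least a constant times $\alpha_2(\jj)$ in every direction," i.e.\ that projecting $f_\jj(E)$ to any line yields an interval of length $\gtrsim\alpha_2(\jj)$. The subtlety is that $E$ itself is merely "not contained in a line," so a priori $\proj_w(E)$ could be tiny for some single direction $w$ (it cannot be zero, but the infimum over $w\in S^1$ of $\diam\proj_w(E)$ is positive by compactness and the not-a-line hypothesis—this is exactly the point to nail down). Once we know $\inf_{w\in S^1}\diam\proj_w(E)=:c_0>0$, then $\diam\proj_w(A_\jj E)=\diam\proj_w(A_\jj(\proj_{A_\jj^{-1}w/|A_\jj^{-1}w|}E))\ge$ ... one shows $\diam\proj_w(A_\jj E)\ge\alpha_2(\jj)\,c_0$ by an elementary singular-value argument (the image under $A_\jj$ of a segment of length $c_0$ in \emph{any} direction has length $\ge\alpha_2(\jj)c_0$, and its projection onto $w$... no—again need a triangle, not a segment). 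So the clean statement to prove first is: $\inf_{w\in S^1}\diam\proj_w(E)>0$, and then: for any linear map $A$ and any set $F$ with $\inf_w\diam\proj_w F\ge c_0$, one has $\inf_w\diam\proj_w(AF)\ge c_0\,\alpha_2(A)$ where $\alpha_2(A)$ is the smallest singular value (pick the triangle in $F$; its image under $A$ is a triangle whose every edge has length $\ge c_0\alpha_2(A)$, hence whose projection onto any $w$ has diameter $\ge$ a fixed fraction of $c_0\alpha_2(A)$). Applying this with $A=A_{\iin t}A_\jj$ (and also with $A=A_\jj$ alone, composed afterwards with $A_{\iin t}$ and then projected, to separate out the $\alpha_1(\iin t)$ versus $\alpha_2(\iin t)$ behavior) delivers the lower bound; the rest is bookkeeping of singular values via submultiplicativity $\alpha_1(\iin t\jj)\le\alpha_1(\iin t)\alpha_1(\jj)$, $\alpha_2(\iin t\jj)\ge\alpha_2(\iin t)\alpha_2(\jj)$, and $\alpha_2(\jj)\ge\la^{|\jj|}$, $\alpha_1(\jj)\le\ua^{|\jj|}$.
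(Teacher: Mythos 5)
Your strategy is in essence the paper's: everything reduces to (i) the fact that, since $E$ is compact and not contained in a line, its convex hull has width bounded below in every direction, and (ii) the singular‑value identities $A_{\iin{t}}^T\vartheta_k(\iin{t})=\alpha_k(\iin{t})\eta_k(\iin{t})$ together with $\alpha_2(\jj)\ge\la^{|\jj|}$, $\alpha_1(\jj)\le\ua^{|\jj|}$. The paper packages (i) much more economically: fix $L$ with $E\subset B(x,L)$ and $B(y,L^{-1})\subset\conv(E)$; then $\conv(E_{\iin{t}\jj})$ contains the ellipse $f_{\iin{t}\jj}(B(y,L^{-1}))$ and $E_{\iin{t}\jj}$ is contained in $f_{\iin{t}\jj}(B(x,L))$, and since the projection of an ellipse $A(B(0,r))$ onto a unit vector $w$ has diameter $2r\|A^Tw\|$, evaluating at $w=\vartheta_1(\iin{t})$ and $w=\vartheta_2(\iin{t})$ gives exactly your two chains $\|A_\jj^TA_{\iin{t}}^T\vartheta_1(\iin{t})\|=\alpha_1(\iin{t})\|A_\jj^T\eta_1(\iin{t})\|\ge\alpha_1(\iin{t})\la^{|\jj|}$ and $\|A_\jj^TA_{\iin{t}}^T\vartheta_2(\iin{t})\|=\alpha_2(\iin{t})\|A_\jj^T\eta_2(\iin{t})\|\le\alpha_2(\iin{t})\ua^{|\jj|}$. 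Your upper bound for $v_{\iin{t}}(\jj)$ is precisely this computation and is correct.

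The one step in your lower bound that fails as written is the inference ``the image triangle has every edge of length $\ge c_0\alpha_2(A)$, hence its projection onto any $w$ has diameter at least a fixed fraction of $c_0\alpha_2(A)$'': a triangle can have three long edges and still be arbitrarily thin, so edge lengths alone give no lower bound on the width. What you actually need is that the width (minimal projection length) scales under a linear map $A$ by at least $\alpha_2(A)$, which is true but requires the area — width equals $2\,\mathrm{area}/(\text{longest edge})$, the area scales by $|\det A|=\alpha_1(A)\alpha_2(A)$ and the longest edge by at most $\alpha_1(A)$ — or, more simply, replace the triangle by an inscribed ball of $\conv(E)$ as the paper does (a planar convex set of width $c_0$ contains a ball of radius $c_0/3$, and a ball makes the image an ellipse for which the projection formula is exact). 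With that repair, your decomposition $\diam\proj_{\vartheta_1(\iin{t})}\bigl(A_{\iin{t}}S\bigr)=\alpha_1(\iin{t})\,\diam\proj_{\eta_1(\iin{t})}(S)$ applied to $S=A_\jj(\conv E)$ closes the argument and extracts the factor $\alpha_1(\iin{t})$ rather than the weaker $\alpha_2(\iin{t}\jj)$, exactly as intended. The compactness claim $\inf_{w\in S^1}\diam\proj_w(E)>0$ that you isolate is correct and is the same input as the paper's inscribed ball.
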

\begin{proof}
Since $E$ is compact and it is not contained in any line, there is a number $L\in\N$ and balls 
$B(x,L)$ and $B(y,L^{-1})$ so that $E$ is included in $B(x,L)$ and $B(y,L^{-1})$ is included in the 
convex hull of $E$. Therefore we have
$
 h_{\iin{t}}(\jj) \geq L^{-1}\alpha_1(\iin{t}\jj) \geq L^{-1}\alpha_1(\iin{t})\la^{|\jj|}
$
and
$
 v_{\iin{t}}(\jj) \leq L\alpha_2(\iin{t}\jj) \leq L\alpha_2(\iin{t})\ua^{|\jj|}
$
as claimed.
\end{proof}

Fix $x\in E$ and $t>0$. We define {\bf $t$-screen} at $x$ to be the closed ball $B(x,t)$ and the 
{\bf zoom function} $\ZZ_{x,t} \colon B(x,t)\to B(0,1)$ by setting
\[
  \ZZ_{x,t}(y)=\frac{y-x}{t}. 
\]
Then we define the {\bf scenery} $N_{x,t}$ around $x$ at scale $t$ by setting
$N_{x,t}=\ZZ_{x,t}(E\cap B(x,t))$. We consider distances of compact subsets of $\R^2$ in terms of 
the Hausdorff metric $d_H$. The notation $\dist$ is used for distances between points, or points 
and 
compact sets, in the Euclidean metric. Any limit of the sequence $(N_{x,t_n})$, where $t_n 
\downarrow 0$, is called a {\bf tangent set} of $E$ at a point $x$. We call a subset $S\subset 
B(0,1)$ an {\bf $\epsilon$-pattern}, if
\[
 S=\biggl( \R\times \bigcup_{i=1}^l I_i \biggr) \cap B(0,1),
\]
where $I_i$ are intervals of length less than
$\epsilon$ for all $i \in \{ 1,\dots, l \}$. We emphasize that these 
intervals are not assumed to be disjoint. Our goal is to study $\eps$-patterns coming from the 
construction rectangles -- even though the cylinders $E_\ii$ are disjoint, the construction 
rectangles might overlap.

Fix $\ii\in\IN$ and let $B(\ii,t)=B(\pi(\ii),t)$. Since the directions 
$\overline{\vartheta}_k(\ii)$ 
can differ for different $\ii$, we zoom into the set by applying, at each step, an appropriate 
rotation. To make this precise, consider the set in the singular basis, and define the {\bf rotated 
screen} $M_{\ii,t}$ by setting
\[
 M_{\ii,t}=\OO_{\ii,t}\left(\ZZ_{\pi(\ii), t}\left( B(\ii,t)\cap E \right)\right)= \OO_{\ii, 
t}(N_{\pi(\ii), t}).
\]
Here $\OO_{\ii,t}$ is the rotation that takes the singular basis $(\vartheta_1(\iin{t}), 
\vartheta_2(\iin{t}))$ to the standard basis of $\R^2$. Thus $M_{\ii, t}$ is the intersection of 
the 
set $E$ with a small ball around the point $\pi(\ii)$, scaled up to fill the whole unit ball, and 
turned around until $\vartheta_1(\iin{t})$ is horizontal. We also define the {\bf approximative 
sceneries} $P_{\ii,t}^K$ for all $K\in\N$ as 
\[
 P_{\ii,t}^K=\OO_{\ii,t}\biggl( \ZZ_{\pi(\ii), t}\biggl( B(\ii,t)\cap \biggl(\bigcup_{\jj\in 
R(\ii,t,K)} R_{\iin{t}}(\jj) \biggr) \biggr)\biggr) ,
\]
where $R(\ii,t,K)=\{\jj\in I^K: B(\ii,t)\cap E_{\iin{t}\jj}\neq\emptyset\}$. Thus $P_{\ii, t}^K$ 
tells us the level $K$ approximation of the set around $\pi(\ii)$ in the singular basis, after we 
have first zoomed into the scale $t$. Note that both $M_{\ii,t}$ and $P_{\ii,t}^K$ are subsets of 
$B(0,1)$. We say that $T\subset B(0,1)$ is a {\bf modified tangent} of $E$ at $\pi(\ii)$ if there 
exists a sequence $t_n\downarrow 0$ so that $M_{\ii,t_n}\to T$.

A set $A \subset \R^2$ is {\bf porous} if there exists $0<\alpha\le 1$ such that for every $x \in 
A$ 
and $0<r<\diam(A)$ there is a point $y \in A$ for which $B(y,\alpha r) \subset B(x,r) \setminus A$. 
We remark that a more precise name for this porosity condition is uniform lower porosity. It is 
well 
known that porous sets have zero Lebesgue measure; for example, see \cite[Proposition 
3.4]{JarvenpaaJarvenpaaKaenmakiRajalaRogovinSuomala2007}. Therefore a closed porous set is nowhere 
dense. Any ``fat'' Cantor set serves as an example of a closed nowhere dense set which is not 
porous.

\begin{lemma}\label{lem:convergence}
  If $T$ is a modified tangent of the self-affine set $E$, then $T$ is closed and porous.
\end{lemma}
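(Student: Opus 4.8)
The plan is to deduce porosity of $T$ from a scale‑uniform porosity estimate for the finite‑scale pictures $M_{\ii,t}$, and then pass to the Hausdorff limit. Closedness is immediate: each $M_{\ii,t}$ is the image of the compact set $E\cap B(\pi(\ii),t)$ under the similarity $\OO_{\ii,t}\circ\ZZ_{\pi(\ii),t}$, hence compact and contained in $B(0,1)$, and a Hausdorff limit of compact subsets of $B(0,1)$ is compact, in particular closed.

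For porosity, the key observation is that $\OO_{\ii,t}\circ\ZZ_{\pi(\ii),t}$ is a similarity (it multiplies all distances by $1/t$), so $M_{\ii,t}$ is $\beta$-porous with the same constant as $E\cap B(\pi(\ii),t)$; since a subset of an $\alpha$-porous set is $\alpha$-porous, it is enough to exhibit $\alpha\in(0,1)$, depending only on $\{f_1,\dots,f_m\}$, such that $E$ itself is $\alpha$-porous. Fix $z=\pi(\aa)\in E$ and $0<r<\diam E$. Set $c_0=\min_{i\ne j}\dist(f_i(E),f_j(E))$, which is positive by the strong separation condition, and let $L$ be the constant from Lemma~\ref{heigthandlength}. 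Two elementary facts drive the argument: first, if $\ii'$ is $\ii$ with its last letter removed then $f_{\ii'}$ contracts distances by at least $\alpha_2(\ii')$, so $\dist(E_{\ii},E\setminus E_{\ii})\ge c_0\,\alpha_2(\ii')$; and second, $\conv(E)\setminus E$ is open and nonempty, hence contains a ball $B(p_0,\roo_0)$ whose $f_{\ii}$-image contains a ball of radius $\roo_0\,\alpha_2(\ii)$ disjoint from $E_\ii$. One then selects the cylinder level $k$ along $\aa$ according to how $r$ compares with $\alpha_1(\aa|_k)$ and $\alpha_2(\aa|_k)$, using Lemma~\ref{heigthandlength} and the one‑step bounds $\la\le\alpha_j(i)\le\ua$ to see that a suitable level exists. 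When $r$ is so small that $E\cap B(z,r)$ is trapped inside a cylinder $E_{\aa|_k}$ of diameter at most $r/10$, any $y$ with $\dist(y,z)=r/2$ gives the hole $B(y,r/5)\subseteq B(z,r)\setminus E$. In the complementary range, where $z$ lies in a long, thin cylinder at scale $r$, the hole is extracted from the interior gap of the appropriate ancestor cylinder $E_{\aa|_{k-1}}$, combining the two facts above with $\dist(E_{\aa|_k},E\setminus E_{\aa|_k})\ge c_0\,\alpha_2(\aa|_{k-1})$ to keep the candidate ball clear of the neighbouring cylinders as well. Either way one obtains $B(y,\alpha r)\subseteq B(z,r)\setminus E$ with $\alpha$ depending only on $c_0$, $L$, $\la$ and $\ua$.

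Finally, porosity transfers to the limit: given $z\in T$ and $0<r<\diam T$, choose $z_n\in M_{\ii,t_n}$ with $z_n\to z$, apply $\alpha$-porosity of $M_{\ii,t_n}$ at scale $r/2$ to obtain $B(y_n,\alpha r/2)\subseteq B(z_n,r/2)\setminus M_{\ii,t_n}$, extract a subsequential limit $y_n\to y$, and check with $M_{\ii,t_n}\to T$ that $B(y,\alpha r/4)\subseteq B(z,r)\setminus T$; hence $T$ is $(\alpha/4)$-porous. The step I expect to be the main obstacle is the porosity of $E$ in the ``long thin'' regime of scales: because the system is genuinely self-affine, the cylinders through $z$ are much longer than wide, while the gaps furnished by the strong separation condition are, measured against the long direction, comparatively narrow, so the hole has to be located using the short direction and the separation constant, uniformly over $z$ and $r$. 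This is exactly where Lemma~\ref{heigthandlength}, the strong separation condition, and the singular‑value bounds $\la\le\alpha_j(i)\le\ua$ are all used.
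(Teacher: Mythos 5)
Your overall architecture mirrors the paper's, which disposes of this lemma in two lines by citation: porosity of $E$ itself is quoted from \cite{Xi2008}, and the transfer of porosity to tangent sets is quoted from \cite[Proposition 5.6]{ChenRossi2015}. Your closedness argument and your limit-transfer argument (porosity is preserved by similarities and by passage to subsets, and a uniform $\alpha$-porosity estimate along the sequence $M_{\ii,t_n}$ survives the Hausdorff limit with a worse constant) are correct; the latter amounts to reproving the cited proposition of Chen and Rossi in the case at hand.

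The genuine gap is exactly where you suspect it: porosity of $E$ in the long--thin regime, and the mechanism you propose there does not work. Two problems. First, the ``interior gap'' $f_{\aa|_{k-1}}(B(p_0,\roo_0))$ is an ellipse sitting somewhere inside $\conv(E_{\aa|_{k-1}})$; its centre lies at distance up to a constant times $\alpha_1(\aa|_{k-1})$ from $z$, whereas in this regime $r$ is comparable to $\alpha_2(\aa|_{k})$, which is exponentially smaller. So the candidate hole need not meet $B(z,r)$ at all. Second, even when it does, disjointness from $E_{\aa|_{k-1}}$ does not give disjointness from $E$: under the strong separation condition the cylinders $E_\ii$ are disjoint, but $\conv(E_\ii)$ may intersect other cylinders $E_\jj$, and the estimate $\dist(E_{\aa|_k},E\setminus E_{\aa|_k})\ge c_0\,\alpha_2(\aa|_{k-1})$ protects only a neighbourhood of $E_{\aa|_k}$, not of the point $f_{\aa|_{k-1}}(p_0)$. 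What actually has to be ruled out is that the long, thin sub-rectangles $R_{\aa|_k}(\jj)$ crossing the window $B(z,r)$ fill up the transverse direction at scale $r$; note that the full transverse projection $\proj_{\vartheta_2(\aa|_k)}(E_{\aa|_k})$ is a rescaled copy of $\proj_{\eta_2(\aa|_k)}(E)$ and may be an entire interval (it is, under the projection condition), so no argument based on gaps in that projection, nor one based on summing the heights $L\ua^{M}\alpha_2(\aa|_k)$ of the $m^{M}$ level-$M$ sub-rectangles (the product $m\ua$ may exceed $1$), can close this. This is precisely the content of Xi's theorem and is not elementary; either cite it, as the paper does, or supply a genuinely new argument for this regime. (Two smaller points: $\conv(E)\setminus E$ is not open since $\conv(E)$ is closed --- you want $\inter(\conv(E))\setminus E$, nonempty once $E$ is not contained in a line --- and the degenerate case of $E$ contained in a line, excluded from Lemma \ref{heigthandlength}, should be treated separately, though there porosity is immediate.)
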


\begin{proof}
  By \cite{Xi2008}, the set $E$ is porous and thus, by \cite[Proposition 5.6]{ChenRossi2015}, any 
tangent set is also porous. Since this remains true also for modified tangent sets we have finished 
the proof.
\end{proof}

Lemma \ref{Oselemma} says that the rotations $\OO_{\iii,t}$ converge for almost every $\ii\in\IN$. 
Therefore there is a correspondence between tangents and modified tangents in our setting. Thus all 
results obtained for modified tangent sets are valid for tangent sets and vice versa. In what 
follows, we mostly consider modified tangents because it is convenient to have a fixed orientation 
of the construction rectangles. 

\subsection{Idea of the proof} \label{sec:strategy}
The idea of the proof is to show that at almost every point of $E$, at all small scales, the 
approximative sceneries are $\epsilon$-patterns; see Lemma \ref{lem:bad set}. In this we are 
following Bandt and K\"aenm\"aki \cite{BandtKaenmaki2013}. In their situation all of the 
construction rectangles are uniformly flat in the vertical direction, but in our case it is not 
immediately clear what ``vertical'' even means, and flatness of the construction rectangles is not 
guaranteed in any direction. To deal with this difficulty we, first of all, let the screen rotate 
according to the singular basis with the construction level of the zoom, turning the construction 
rectangles so that they are flat in a controllable way in the rotated basis. 

On the other hand, to make sure that the construction rectangles are flat enough in the vertical 
direction of the singular basis, we prove in Lemmas \ref{good part} and \ref{height2} that there is 
a set of large measure so that the construction rectangles for points in this set are long in the 
horizontal direction and narrow in the vertical direction. Lemma \ref{good part} is proved below 
but 
Lemma \ref{height2} is postponed to the next section, as it requires some more definitions. 

To make use of Lemma \ref{lem:bad set}, we show in Lemma \ref{projectionlemma} that the 
approximative sceneries get close to the rotated screens and thus can be used to approximate the 
modified tangent sets. Here we do not know whether the rectangles in the approximative sceneries 
overlap or not, but this in not a problem, since by recalling Lemma \ref{lem:convergence}, we can 
deduce that the horizontal projection $C$ of the modified tangent set is porous. Finally, we use 
Lemma \ref{Oselemma} to transfer the result to the original tangent sets of $E$.

\begin{lemma}\label{good part}
If $\lambda_1<\theta<\gamma<\lambda_2$, then for all $\roo>0$ there is a set $E_\roo$ with 
$\nu_p(E_\roo)\ge 1-\roo$ such that the following two conditions are satisfied.
\begin{enumerate}
 \item\label{yksi}
 There are numbers $a=a(\gamma,\theta)>1$ and $N(\roo)\in \N$ such that for all $\ii\in E_\roo$ and 
$n\geq N(\roo)$ we have $\alpha_1(\iin{n})>a^n \alpha_2(\iin{n})$.
 
 \item\label{kaksi}
For all $D\in\N$ there is $N(D)\in \N$ such that for all $\ii\in E_\roo$, and for all $t$ with 
$n(\ii,t)=n,$ and for the $k\in \N$ satisfying $N(D)k \leq n < N(D)(k+1)$, we have 
$h_{\iin{t}}(\jj) 
> 2D \alpha_2(\iin{n})$ for all $\jj\in I^k$.
\end{enumerate}
\end{lemma}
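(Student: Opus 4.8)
The plan is to build the set $E_\roo$ via an Egorov-type argument applied to the almost-sure convergence of the Lyapunov-exponent sequences $\lambda_k(\iin n)$ to the constants $\lambda_k$, and then verify the two stated inequalities on that set by combining this uniform convergence with Lemma \ref{heigthandlength}. First I would fix $\lambda_1<\theta<\gamma<\lambda_2$ and choose auxiliary values $\gamma',\theta'$ with $\lambda_1<\theta<\theta'<\gamma'<\gamma<\lambda_2$. Recall from \S\ref{sec:orientation} that $\lambda_k(\iin n)=-\tfrac1n\log\alpha_k(\iin n)\to\lambda_k$ for $\nu_p$-almost every $\ii$. By Egorov's theorem there is a set $E_\roo$ with $\nu_p(E_\roo)\ge 1-\roo$ on which this convergence is uniform; in particular there is $N_0(\roo)\in\N$ so that for all $\ii\in E_\roo$ and all $n\ge N_0(\roo)$ we have
\[
  \alpha_1(\iin n) < e^{-\theta' n}
  \qquad\text{and}\qquad
  \alpha_2(\iin n) > e^{-\gamma' n}.
\]

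For part \eqref{yksi}: on $E_\roo$ the two displayed bounds give, for $n\ge N_0(\roo)$,
\[
  \frac{\alpha_1(\iin n)}{\alpha_2(\iin n)} \cdot \frac{1}{\alpha_1(\iin n)}
  = \frac{1}{\alpha_2(\iin n)} > e^{\gamma' n},
\]
so I would instead use $\alpha_1(\iin n)/\alpha_2(\iin n) = \alpha_1(\iin n)^{2}\cdot(\alpha_1(\iin n)\alpha_2(\iin n))^{-1}$ together with $\alpha_1\alpha_2=|\det A_{\iin n}|$; cleaner is simply: from $\alpha_1(\iin n)\alpha_2(\iin n)\le \alpha_1(\iin n)^2$ one has nothing, so the right move is to bound the ratio directly. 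Since $\lambda_2>\lambda_1$, on the Egorov set we actually get $-\tfrac1n\log\bigl(\alpha_2(\iin n)/\alpha_1(\iin n)\bigr)\to \lambda_2-\lambda_1>0$ uniformly, hence for $n\ge N(\roo)$ (enlarging $N_0$ if needed) $\alpha_2(\iin n)/\alpha_1(\iin n) < e^{-(\gamma-\theta)n/2}$, so setting $a=e^{(\gamma-\theta)/2}>1$ gives $\alpha_1(\iin n)>a^n\alpha_2(\iin n)$. This is the honest route and it gives the claimed dependence $a=a(\gamma,\theta)$.

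For part \eqref{kaksi}: fix $D\in\N$. By Lemma \ref{heigthandlength} (noting $E$ is not contained in a line, which is forced by assumption (2) plus the strong separation condition), for any $\ii$, $t$ with $n(\ii,t)=n$, and any $\jj\in I^k$,
\[
  h_{\iin t}(\jj) \ge L^{-1}\alpha_1(\iin n)\la^{k}.
\]
On $E_\roo$, for $n\ge N_0(\roo)$ we have $\alpha_1(\iin n)>e^{-\theta' n}$ and $\alpha_2(\iin n)<e^{-\gamma n}$ wait — I only control $\alpha_2$ from below on the Egorov set, so I would instead arrange the Egorov set to also give an upper bound $\alpha_2(\iin n)<e^{-\gamma n}$, which is immediate from the same uniform convergence $\lambda_2(\iin n)\to\lambda_2>\gamma$. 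Then
\[
  \frac{h_{\iin t}(\jj)}{\alpha_2(\iin n)}
  \ge \frac{L^{-1}e^{-\theta' n}\la^{k}}{e^{-\gamma n}}
  = L^{-1}\la^{k}\, e^{(\gamma-\theta')n}.
\]
Now impose the regime $N(D)k\le n < N(D)(k+1)$, so $n\ge N(D)k$ and $k\le n/N(D)$; then $\la^{k}\ge \la^{n/N(D)}$ and
\[
  \frac{h_{\iin t}(\jj)}{\alpha_2(\iin n)}
  \ge L^{-1}\exp\Bigl(\bigl(\gamma-\theta' + \tfrac{\log\la}{N(D)}\bigr)n\Bigr).
\]
Since $\gamma-\theta'>0$ and $\log\la<0$ is fixed, choosing $N(D)$ large makes the exponent coefficient positive, and then choosing $N(D)$ larger still (so that $n\ge N(D)$ is large) forces the right side above $2D$. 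Take the maximum of this $N(D)$ with $N_0(\roo)$.

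The main obstacle is bookkeeping the two-parameter dependence correctly: the set $E_\roo$ and the threshold $N_0(\roo)$ depend only on $\roo$ (and on the fixed $\theta,\gamma$), the constant $a$ depends only on $\gamma,\theta$, while the threshold $N(D)$ in part \eqref{kaksi} must be allowed to depend on $D$ but the set $E_\roo$ must not. The clean way to achieve this is exactly the ordering above — first fix $\theta<\theta'<\gamma'<\gamma$ and extract one Egorov set $E_\roo$ with one threshold controlling $\alpha_1$ from below and $\alpha_2$ from above at the geometric rates $e^{-\theta' n}$ and $e^{-\gamma n}$, and only afterwards, with $E_\roo$ frozen, choose $N(D)$ depending on $D$ and on the fixed constants $L,\la,\gamma,\theta'$. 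One small point to check carefully is that $n=n(\ii,t)\to\infty$ as $t\downarrow 0$ (stated in \S\ref{sec:zooming}), so the hypothesis $n(\ii,t)=n$ with $n\ge N(D)$ is attainable and the estimates above, valid for $n\ge N(D)$, indeed apply.
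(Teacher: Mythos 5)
Your proof is correct and follows essentially the same route as the paper: Egorov's theorem gives uniform convergence of $\lambda_k(\iin{n})$ on a set of measure at least $1-\roo$, yielding a lower bound $\alpha_1(\iin{n})>e^{-\theta n}$ and an upper bound $\alpha_2(\iin{n})<e^{-\gamma n}$ for large $n$, from which both claims follow via Lemma \ref{heigthandlength}. The only blemish is that your first displayed pair of inequalities has both directions reversed (convergence to $\lambda_1<\theta'$ gives a \emph{lower} bound on $\alpha_1$, and to $\lambda_2>\gamma'$ an \emph{upper} bound on $\alpha_2$), but you catch and repair this yourself, and your final argument -- controlling the ratio $\alpha_2(\iin{n})/\alpha_1(\iin{n})$ for part \eqref{yksi}, and combining $h_{\iin{t}}(\jj)\ge L^{-1}\alpha_1(\iin{t})\la^{k}$ with the regime $k\le n/N(D)$ for part \eqref{kaksi} -- matches the paper's.
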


\begin{proof}
(1) Fix $\roo>0$. By Egorov's theorem, we find a set $E_\roo\subset \IN$ with $\nu_p(\IN\setminus 
E_\roo)\leq\roo$ where $\lambda_{1}(\iin{n})$ and $\lambda_{2}(\iin{n})$ from \eqref{eq:sequences} 
converge uniformly. Thus we find $N=N(\roo)\in\N$ so that
\[
\alpha_1(\iin{n})\geq e^{-\theta n}
\qquad\text{and}\qquad
\alpha_2(\iin{n})\leq e^{-\gamma n}
\]
for all $n\geq N$. Letting $a=e^{\gamma - \theta}$ we have $\alpha_1(\iin{n})/\alpha_2(\iin{n}) \ge
e^{(\gamma-\theta)n}=a^n$, and the first claim is proved. 

(2) Let $L \in \N$ be as in Lemma \ref{heigthandlength}. Fix $D \in \N$ and choose $N(D) \geq 
N(\roo)$ so that
\begin{equation} \label{Lestimate}
  a^{N(D)} \la > 2DL.
\end{equation}
Let $\ii\in E_{\roo}$ and $t$ be so small that $ n :=n(\ii, t)\geq N(D)$. Fix $k\in\N$ so that 
$N(D)k \leq n < N(D)(k+1) $, and let $\jj\in I^k$. Now, by Lemma \ref{heigthandlength} and 
\eqref{yksi}, we have
\begin{align*}
h_{\iin{t}}(\jj)
&\geq \alpha_1(\iin{t})\la^{k} L^{-1}
 > a^n \alpha_2(\iin{t})\la^{k} L^{-1}\\
&\geq \alpha_2(\iin{t}) (a^{N(D)} \la)^k L^{-1}
>2^kD^k L^{k-1} \alpha_2(\iin{t})
 \geq 2D \alpha_2(\iin{t})
\end{align*}
as claimed.
\end{proof}

\section{Main result}\label{sec:main}

Let us begin by formulating the main theorem of the article.

\begin{theorem} \label{main}
  Suppose that $\{ f_1,\ldots,f_m \}$ is an affine iterated function system and $E$ the associated 
self-affine set. If
  \begin{enumerate}
  \item\label{as:separation} there exists $\delta>0$ is such that $\min\{ \dist(f_i(E),f_j(E)) : i 
\ne j \} > \delta$; that is, $E$ satisfies the strong separation condition,
  \item\label{as:projection} for $\nu_p$-almost all $\ii\in\IN$ there is $n_0 \in \N$ such that for 
all $n \ge n_0$ and all $\jj\in I^*$, the projection $\proj_{\overline\vartheta(\ii)} 
(E_{\iin{n}\jj})$ is a line segment,
  \item\label{as:lyapunov} the probability vector $p=(p_1,\dots,p_m)$ is such that the Lyapunov 
exponents satisfy $\lambda_1<\lambda_2$,
  \end{enumerate}
  then for $\nu_p$-almost all $x\in E$ the tangent sets at $x$ are either of the form 
$\OO((\R\times 
C) \cap B(0,1))$, where $C$ is a closed porous set, or of the form $\OO((\ell\times \{0\}) \cap 
B(0,1))$, where $\ell$ is an interval containing at least one of the intervals $[-1,0]$ and 
$[0,1]$. 
Here $\OO$ is the rotation that takes $(1,0)$ to $\overline{\vartheta}_1(\pi^{-1}(x))$.
\end{theorem}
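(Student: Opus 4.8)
The strategy follows the outline in \S\ref{sec:strategy}, working throughout with modified tangents and transferring to genuine tangents at the end via Lemma \ref{Oselemma}. The plan is to fix $\roo>0$, pick Lyapunov parameters $\lambda_1<\theta<\gamma<\lambda_2$, and restrict attention to the good set $E_\roo$ of Lemma \ref{good part}, intersected with the full-measure set on which assumptions \eqref{as:projection} and \eqref{as:lyapunov} hold and on which $\vartheta_k(\iin{n})\to\overline\vartheta_k(\ii)$. On this set, Lemma \ref{good part}\eqref{kaksi} together with Lemma \ref{height2} (the ``flatness'' lemma deferred to \S\ref{sec:main}) guarantees that, at every sufficiently small scale $t$, every construction rectangle $R_{\iin{t}}(\jj)$ meeting $B(\ii,t)$ with $|\jj|=k$ has vertical side $v_{\iin{t}}(\jj)$ small compared to its horizontal extent; combined with the projection hypothesis \eqref{as:projection}, which forces $E_{\iin{t}\jj}$ to project onto a \emph{full} segment in the $\vartheta_1(\iin{t})$-direction, this shows the approximative scenery $P_{\ii,t}^K$ is an $\eps$-pattern for $K$ in the appropriate range. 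This is the content I would isolate as Lemma \ref{lem:bad set}: outside a set of measure $\roo$, for all small $t$ the rescaled construction rectangles fill horizontal strips of small height.

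Next I would prove Lemma \ref{projectionlemma}, showing $d_H(M_{\ii,t},P_{\ii,t}^K)$ is small once $K$ is large: the approximative scenery contains $N_{\ii,t}$ up to the error coming from the vertical sides of the level-$K$ rectangles (which is $O(\ua^K/\alpha_2(\iin{t})\cdot\text{something})$, controlled by Lemma \ref{heigthandlength} and the construction-level estimate $\alpha_1(\iin{n})\asymp t$), while conversely every point of a rectangle in $P_{\ii,t}^K$ is within that same error of a point of $E$. Passing to a convergent subsequence $t_n\downarrow 0$, the modified tangent $T=\lim M_{\ii,t_n}$ is then a Hausdorff limit of sets each sandwiched between an $\eps_n$-pattern and its $\eps_n$-neighbourhood with $\eps_n\to 0$, so $T=(\R\times C)\cap B(0,1)$ where $C=\proj_{e_2}(T)$ is closed. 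Lemma \ref{lem:convergence} (via porosity of $E$) gives that the modified tangent $T$ is porous, hence its projection $C$ is porous as a closed subset of $\R$ — here one uses that $T$ is a product in the first coordinate, so porosity of $T$ descends to porosity of $C$.

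The remaining point is the dichotomy: either $C$ is a genuine porous (in particular nowhere dense) set, or $C$ contains an interval, in which case by porosity $C$ must be a single interval, and one checks it contains $[-1,0]$ or $[0,1]$ because $0\in C$ (the basepoint lies in every scenery) and $C$ is ``balanced'' — the construction rectangles straddle $\pi(\ii)$ on at least one side at scale $t$ by definition of $n(\ii,t)$, so the limit interval reaches the boundary of $B(0,1)$ on that side. Finally, Lemma \ref{Oselemma} says $\OO_{\ii,t}\to\OO_{\ii}$, where $\OO_\ii$ takes $(1,0)$ to $\overline\vartheta_1(\ii)$, so $N_{\pi(\ii),t_n}=\OO_{\ii,t_n}^{-1}(M_{\ii,t_n})\to\OO_\ii^{-1}(T)$; writing $\OO=\OO_\ii^{-1}$ gives exactly the stated form for the tangent sets of $E$ at $x=\pi(\ii)$.

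\textbf{Main obstacle.} The crux is Lemma \ref{lem:bad set} — showing the approximative sceneries are $\eps$-patterns \emph{at every small scale simultaneously}, not just along a subsequence. The delicate interplay is between three scales: the construction level $n=n(\ii,t)$, the ``good'' horizontal level $k$ with $N(D)k\le n<N(D)(k+1)$ supplied by Lemma \ref{good part}\eqref{kaksi}, and the approximation depth $K$ needed in Lemma \ref{projectionlemma}; one must show these can be chosen compatibly and uniformly over $\ii\in E_\roo$ and over all sufficiently small $t$, which is where the uniform convergence from Egorov (baked into Lemma \ref{good part}) does the real work. A secondary subtlety is that the construction rectangles in $P_{\ii,t}^K$ may overlap even though the cylinders $E_\ii$ are disjoint, so one cannot read off $C$ as a disjoint union; it is only through the porosity of the \emph{limit} (Lemma \ref{lem:convergence}) that one recovers that $C$ is porous.
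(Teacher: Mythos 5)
Your outline follows the paper's architecture at the level of named lemmas, but it misidentifies the mechanism behind the two steps that carry the actual mathematical content, and in both places the argument you sketch would not go through. First, the $\epsilon$-pattern property is \emph{not} a consequence of flatness (Lemma \ref{good part}\eqref{kaksi} plus Lemma \ref{height2}) together with the projection condition. Those facts make each rescaled construction rectangle a long, thin strip whose projection is a full segment, but an $\epsilon$-pattern requires each strip to span the \emph{entire} screen $B(0,1)$ horizontally, and nothing prevents an endpoint of a level-$(n+K)$ rectangle from landing inside $B(x,t)$. Ruling this out is the whole point of Lemma \ref{lem:bad set}, and it is done probabilistically, not by uniformity: one defines $n$-forbidden words as those whose level-$k$ rectangle meets one of $4m^K$ ``forbidden'' vertical lines placed at horizontal distance $D\alpha_2(\iin{n})$ from the rectangle endpoints, shows (using that no line meets all first-level cylinders, Remark \ref{rem:noline}) that the relative mass of cylinders meeting a fixed vertical line decays like $(1-\underline p)^k$, and concludes $\sum_n\nu_p(A_{n,K}(D))<\infty$ so Borel--Cantelli applies. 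Your plan attributes the work to Egorov-type uniform convergence, which only supplies Lemma \ref{good part}; without the forbidden-word counting and the Borel--Cantelli step there is no proof. Note also that even the paper's lemma does not give $\epsilon$-patterns ``at every small scale simultaneously'': it only covers scales with $t<D\alpha_2(\iin{n(\ii,t)})$, and the complementary regime is precisely where the second alternative of the theorem arises.

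Second, your account of the dichotomy is not correct. You propose ``either $C$ is porous or $C$ contains an interval, in which case by porosity $C$ is a single interval'' --- but a porous set is nowhere dense and cannot contain an interval, and moreover you have conflated the vertical fibre set $C$ with the horizontal interval $\ell$: the degenerate alternative in the theorem is $(\ell\times\{0\})\cap B(0,1)$, a one-dimensional horizontal segment, not a strip $(\R\times C)$ with $C$ an interval. The actual dichotomy (proof of Lemma \ref{lem:modifiedtangents}) is on the ratio $\alpha_2(\iin{n(\ii,t_k)})/t_k$: if it is bounded below along a subsequence one is in the $\epsilon$-pattern regime of Lemma \ref{lem:bad set} and gets $(\R\times C)\cap B(0,1)$ with $C$ closed and porous; if it tends to $0$ the single visible cylinder collapses vertically and the tangent is a horizontal segment. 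The claim that $\ell$ contains $[-1,0]$ or $[0,1]$ then requires the separate geometric argument showing that if both ends of $R_{\iin{n}}$ were visible well inside the screen, the projection condition would force another cylinder $E_{\iin{n-1}j}$ within distance $d+v_{\iin{n}}+2v_{\iin{n-1}}$ of $x$, contradicting $t\le C\alpha_2(\iin{n})$ failing; your appeal to ``$0\in C$ and $C$ is balanced'' does not substitute for this. The remaining parts of your plan (Lemma \ref{projectionlemma}, porosity via Lemma \ref{lem:convergence}, and the final transfer from modified tangents to tangents via convergence of the rotations $\OO_{\ii,t}$) do match the paper.
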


\begin{remark} \label{re:ontheassumptions2}
  To verify the assumption \eqref{as:lyapunov} in Theorem \ref{main}, it suffices to check that the 
iterated function system is pinching and twisting since then the assumption \eqref{as:lyapunov} 
follows immediately from \cite[Theorem 1.2]{Viana2014}; see also \cite{Furstenberg1963}. An affine iterated function system is {\bf 
pinching} if for any constant $C>1$ there is a finite word $\ii$ so that 
$\alpha_1(\ii)>C\alpha_2(\ii)$. It is {\bf twisting} if for any finite set of vectors 
$\{v,w_1,\ldots,w_n\}\subset\R^2$, there exists a finite word $\jj$ so that $A_\jj v$ is not 
parallel to $w_j$ for any $j\in\{1,\ldots,n\}$. It is worthwhile to remark that
  in particular \cite[Theorem 1.2]{Viana2014} applies to any Bernoulli measure obtained from a positive 
probability vector.
  
  In the carpet case, where the linear part of $f_i$ is the diagonal matrix $\diag(h_i,v_i)$, the 
assumption \eqref{as:lyapunov} is equivalent to
  \begin{equation} \label{eq:3.1}
    -\sum_{i=1}^m \nu_p(f_i([0,1]^2)) \log h_i \ne -\sum_{i=1}^m \nu_p(f_i([0,1]^2)) \log v_i.
  \end{equation}
  Indeed, by the ergodic theorem, the left-hand side of \eqref{eq:3.1} equals to $\lim_{n \to 
\infty} \tfrac{1}{n} \log h_{\iii|_n}$ and the right-hand side equals to $\lim_{n \to \infty} 
\tfrac{1}{n} \log v_{\iii|_n}$ for $\nu_p$-almost all $\iii \in I^\N$. Fix $\iii$ so that these 
limits and $\lambda_1(\iii)$, $\lambda_2(\iii)$, and $\overline{\vartheta}_1(\iii)$ exist. By Lemma 
\ref{Oselemma}, $\vartheta_k(\iii|_n)$ converges to $\overline{\vartheta}_k(\iii)$ for both $k \in 
\{ 1,2 \}$. Therefore, for some $k \in \{ 1,2 \}$, we have $\alpha_k(\iii|_n) = h_{\iii|_n}$ and 
$\alpha_{3-k}(\iii|_n) = v_{\iii|_n}$ for all large enough $n$. This clearly implies that 
$\lambda_k 
= -\sum_{i=1}^m \nu_p(f_i([0,1]^2)) \log h_i$ and $\lambda_{3-k} = -\sum_{i=1}^m 
\nu_p(f_i([0,1]^2)) 
\log v_i$.

From the point of view of our theorem, it would be interesting to know if for any self-affine set there exists a positive probability vector $p$ that gives rise to distinct Lyapunov exponents. Definitely pinching is a necessary condition for 
this: having distinct Lyapunov exponents imply that $\alpha_1(\iin{n})$ gets exponentially larger than $\alpha_2(\iin{n})$ at $\nu_p$ almost everywhere. Since it is easy to define Bernoulli measures having distinct Lyapunov exponents on self-affine carpets we see that twisting is not a necessary condition. We also remark that there exist affine iterated function systems where the mappings are not similitudes but the Lyapunov exponents coincide for all ergodic measures. For example, choose mappings that have the same linear part which is a composition of a diagonal contraction having distinct eigenvalues and a rotation of 90 degrees. Since the second level compositions of the mappings are similitudes it is impossible to have $\lambda_1<\lambda_2$ for any ergodic measure.
\end{remark}

\begin{remark} \label{re:ontheassumptions}
  The assumption \eqref{as:projection} in Theorem \ref{main} is referred to as the {\bf projection 
condition}. It is satisfied for example if the projection of the the set $\bigcup_{i=1}^m f_{i}(X)$ 
in any direction is a line segment, where $X$ is the convex hull of $E$. To see this, fix a line 
$\ell$ that intersect $\bigcup_{i=1}^m f_i(X)$. Let $j_1$ be such that $\ell$ intersects 
$f_{j_1}(X)$. The crucial observation now is that the line $f_{j_1}^{-1}(\ell)$ intersects 
$\bigcup_{i=1}^m f_i(X)$. If this was not the case, then the line $f_{j_1}^{-1}(\ell)$ would divide 
the convex hull $X$ in two parts, both of which contain sets $f_i(X)$. This contradicts the 
assumption. Therefore, if $j_2$ is so that $f_{j_1}^{-1}(\ell)$ intersects $f_{j_2}(X)$, then we 
see 
that $\ell$ intersects $f_{j_1} \circ f_{j_2}(X)$. Continuing in this manner, we find $\jjj = 
(j_1,j_2,\ldots) \in I^\N$ such that $\pi(\jjj) \in \ell$ which is what we wanted to show.
  
  It is worth noticing that in the carpet case, if \eqref{eq:3.1} holds, then it suffices to 
consider only one projection: There are exactly two singular directions which are invariant under 
all the maps, so that in any case, in order for the assumption \eqref{as:projection} to hold, it 
suffices to consider at most two directions. We may assume that the right-hand side of 
\eqref{eq:3.1} is greater than the left-hand side. This guarantees that the horizontal direction is 
the direction $\overline\vartheta_1(\ii)$ for $\nu_p$-almost all $\ii\in \IN$. Indeed, notice that 
for almost all $\ii\in \IN$, eventually $\diam(\proj_x(R_{\iin{n}}))>\diam(\proj_y(R_{\iin{n}}))$, 
where $\proj_x$ and $\proj_y$ denote the projections onto the $x$-axis and $y$-axis, respectively. 
This means that for almost all $\ii\in \IN$, eventually $\diam(\proj_x(R_{\iin{n}}))=\alpha_1 
(\iin{n})$. Hence the vector $\vartheta _1(\iin{n})$ eventually becomes horizontal. Observe that it 
is essential that the projection condition is defined pointwise.
\end{remark}

\begin{remark}\label{rem:noline}
  The assumptions \eqref{as:separation} and \eqref{as:projection} of Theorem \ref{main} imply that 
$E$ is not contained in a line: Assume, to the contrary, that $E$ is contained in a line $\ell$. 
Then for any $n$ all the rectangles $R_{\iin{n}}$ intersect this line. Condition 
\eqref{as:separation} guarantees that $E$ itself cannot be a line segment. Thus $E_{\iin{n}}$ is 
not 
a line segment. Also, the direction $\overline{\vartheta}_2(\ii)$ is not the direction of the line 
$\ell$ for any $\ii$ in a set of full measure, since then the projection onto 
$\overline{\vartheta}_1(\ii)$ would be a single point and not a line segment. Thus, for almost all 
$\ii$, the angle between $\overline{\vartheta}_2(\ii)$ and $\ell$ is positive, implying that  
$\proj_{\overline{\vartheta}_1(\ii)} (E_{\iin{n}\jj})$ is not a line segment thus giving a 
contradiction with the assumption \eqref{as:projection}.

  Therefore, since $E$ is not contained in any line, there is a construction level $k_0$ so that 
when $|\ii|=k_0$, no line in any direction intersects all ellipses $f_\ii(B)$, where $B$ is a ball 
containing $E$. Without loss of generality we may assume that $k_0=1$, since otherwise we can 
consider the iterated function system $\{f_\ii:\ii\in I^{k_0}\}$. This is not a restriction, since 
tangent sets of $E$ only depend on the set $E$ itself and not on the iterated function system that 
generates $E$.
\end{remark}

We will now start preparations for the proof of the main result. Working in the setting of Lemma 
\ref{good part}, we introduce a sequence of lemmas gradually converging to the proof. Fix $\roo>0$ 
and let $E_\roo$ be as in Lemma \ref{good part}. Furthermore, let $K\in \mathbb N$ and denote 
\[
\epsilon(K)=  L \delta^{-1}\ua^K,
\] 
where $L$ is as in Lemma \ref{heigthandlength} and $\delta>0$ as in the assumption 
\eqref{as:separation} of Theorem \ref{main}.

\begin{lemma} \label{height2}
  Under the assumptions of Theorem \ref{main}, for every $K\in\N$ there exists $t_0>0$ such that 
$v_{\iin{t}}(\jj)\leq t \epsilon(K)$ for all $t<t_0$, $\ii\in E_\roo$, and $\jj\in I^K$.
\end{lemma}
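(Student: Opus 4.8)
\textbf{Plan for the proof of Lemma \ref{height2}.}
The goal is to bound the vertical extent $v_{\iin{t}}(\jj)$ of the construction rectangle $R_{\iin{t}}(\jj)$ for $\jj\in I^K$ in terms of $t$. The natural strategy is to compare $v_{\iin{t}}(\jj)$ with the minor semiaxis $\alpha_2(\iin{t})$ of the ellipse $f_{\iin{t}}(B(0,1))$ via Lemma \ref{heigthandlength}, and then to compare $\alpha_2(\iin{t})$ with the scale $t$ using the definition of the construction level of the zoom together with the strong separation condition. First I would invoke Lemma \ref{heigthandlength} to get $v_{\iin{t}}(\jj)\leq \alpha_2(\iin{t})\ua^K L$; since $|\jj|=K$ this is immediate and requires only that $E$ is not contained in a line, which holds by Remark \ref{rem:noline}. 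So the whole problem reduces to showing $\alpha_2(\iin{t})\leq t\delta^{-1}$ for all small enough $t$, uniformly over $\ii\in E_\roo$; combined with the previous inequality this gives exactly $v_{\iin{t}}(\jj)\leq t L\delta^{-1}\ua^K = t\epsilon(K)$.

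The estimate $\alpha_2(\iin{t})\leq t\delta^{-1}$ should come from the \emph{maximality} in the definition of $n=n(\ii,t)$: since $n$ is the largest integer for which $B(\pi(\ii),t)$ meets only one level-$n$ cylinder, the ball $B(\pi(\ii),t)$ must meet at least two level-$(n+1)$ cylinders, say $E_{\iin{t}i}$ and $E_{\iin{t}j}$ with $i\neq j$. By the strong separation condition \eqref{as:separation}, $\dist(E_{i'},E_{j'})>\delta$ for $i'\neq j'$, and applying $f_{\iin{t}}$, whose linear part has smallest singular value $\alpha_2(\iin{t})$ and which contracts distances by a factor at most $\alpha_1(\iin{t})$ but at least $\alpha_2(\iin{t})$, one gets $\dist(E_{\iin{t}i},E_{\iin{t}j})\geq \alpha_2(\iin{t})\delta$. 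Since both these cylinders meet $B(\pi(\ii),t)$, which has diameter $2t$, we obtain $\alpha_2(\iin{t})\delta \leq 2t$, i.e.\ $\alpha_2(\iin{t})\leq 2t\delta^{-1}$. (A factor of $2$ here is harmless: one can either absorb it, or note that $B(\pi(\ii),t)$ having radius $t$ forces the two cylinders it meets to lie within distance $2t$, so the bound is $\alpha_2(\iin{t})\leq 2t\delta^{-1}$; the constant in $\epsilon(K)$ can be adjusted, or one checks the intended normalization gives the stated form.) The uniformity over $\ii\in E_\roo$ is automatic because this argument uses only \eqref{as:separation}, not the Egorov set $E_\roo$; the role of $t_0$ and $E_\roo$ is merely to guarantee $n(\ii,t)\geq N(D)$-type lower bounds elsewhere, but here we only need $t$ small enough that $n(\ii,t)\geq 1$, which holds for all $t$ below the separation scale.

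Assembling these: for $t<t_0$ (with $t_0$ chosen so that $n(\ii,t)\ge 1$ for all relevant $\ii$, e.g.\ $t_0 = \diam(E)$ works since then $B(\pi(\ii),t)$ cannot contain all of $E$), we have
\[
  v_{\iin{t}}(\jj)\;\leq\; \alpha_2(\iin{t})\,\ua^K L \;\leq\; 2t\delta^{-1}\ua^K L \;=\; 2t\,\epsilon(K),
\]
and after the appropriate normalization of the separation constant this is the claimed bound $v_{\iin{t}}(\jj)\le t\epsilon(K)$. The main obstacle I anticipate is getting the constants exactly right in the comparison between $\alpha_2(\iin{t})$ and $t$: one must be careful whether "$B(\pi(\ii),t)$ intersects $E_{\iin{t}i}$" is interpreted with the closed ball of radius $t$ (diameter $2t$) or with some other normalization, and whether the separation $\delta$ in \eqref{as:separation} refers to the first-level cylinders $f_i(E)$ or, after the reduction in Remark \ref{rem:noline} to $k_0=1$, to the appropriately rescaled system — but this is bookkeeping rather than a conceptual difficulty. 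Everything else is a direct chain of the two inequalities above, and no new idea beyond Lemma \ref{heigthandlength} and the strong separation condition is needed.
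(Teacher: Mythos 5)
Your proposal is correct and follows essentially the same route as the paper: Lemma \ref{heigthandlength} gives $v_{\iin{t}}(\jj)\le \alpha_2(\iin{t})\ua^K L$, and the maximality of $n(\ii,t)$ together with the strong separation condition gives $\alpha_2(\iin{t})\le t\delta^{-1}$. The factor $2$ you worry about disappears because the centre $\pi(\ii)$ itself lies in one of the two level-$(n+1)$ cylinders meeting $B(\pi(\ii),t)$, so the \emph{radius} $t$ (not the diameter) already dominates $\dist(E_{\iin{t}i},E_{\iin{t}j})\ge\alpha_2(\iin{t})\delta$, which yields exactly the stated constant $\epsilon(K)=L\delta^{-1}\ua^K$.
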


\begin{proof}
The $t$-screen $B(\pi(\ii),t)$ intersects at least two level $n(\ii,t)+1$ construction cylinders, 
and by assumption \eqref{as:separation} of Theorem \ref{main}, we have the estimate $t\geq 
\alpha_2(\iin{t}) \delta \geq \la^{n(\ii,t)} \delta$, where $\delta$ is as in that assumption. This 
shows that we have a uniform lower bound for $n(\ii,t)$ that increases as $t$ decreases. Take 
$t_0>0$ so small that for all $t<t_0$, it is the case that $n(\ii, t)>K$ for all $\ii\in \IN$. By 
Lemma \ref{heigthandlength}, we then have
 \[
  v_{\iin{t}}(\jj)\leq \alpha_2(\iin{t}) \ua^K L \leq t\delta^{-1}\ua^K L= t \epsilon(K)
 \]
 as claimed.
\end{proof}

As described in \S \ref{sec:strategy}, we want to investigate the size of the set of points for 
which the approximative scenery is not an $\epsilon(K)$-pattern. For technical reasons we consider 
the following sets. For every $D>0$ and $K, n\in \mathbb N$ we define 
\begin{align*}
  A_{n,K}(D) = \{\ii\in E_\roo : \;&\text{there is }t\text{ such that }n(\ii, t)=n, \\ &D 
\alpha_2(\iin{n}) >t, \text{ and  }P_{\ii, t}^K \text{ is not an }\epsilon(K)\text{-pattern}\},
\end{align*}
and let $B_K(D)= \limsup_{n \to\infty} A_{n,K}(D)$.

In the following two lemmas, the reader should bear in mind that while the strong separation 
condition guarantees the construction cylinders to be disjoint, the corresponding construction 
rectangles may overlap. This is not a problem since, for example, the proof of the following lemma 
concerns only the number of certain vertical edges. Recall also that the choice of $n(\iii,t)$ 
guarantees that the screen $B(\pi(\iii),t)$ contains points only from one level $n$ construction 
cylinder.

\begin{lemma}\label{lem:bad set}
  Under the assumptions of Theorem \ref{main}, we have $\nu_p(B_K(D))=0$ for all $D>0$ and $K\in 
\mathbb N$.
\end{lemma}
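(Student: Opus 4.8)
The plan is to show that $\nu_p(A_{n,K}(D))$ is summable in $n$, so that the Borel--Cantelli lemma gives $\nu_p(B_K(D)) = \nu_p(\limsup_n A_{n,K}(D)) = 0$. The point is to estimate, for a fixed $\ii \in E_\roo$ with $n(\ii,t) = n$ and $D\alpha_2(\iin{n}) > t$, how the approximative scenery $P_{\ii,t}^K$ can fail to be an $\epsilon(K)$-pattern. By Lemma \ref{height2}, once $t$ is small enough each construction rectangle $R_{\iin{t}}(\jj)$ with $|\jj| = K$ has vertical side-length $v_{\iin{t}}(\jj) \le t\epsilon(K)$, so after applying $\ZZ_{\pi(\ii),t}$ and the rotation $\OO_{\ii,t}$ its image has vertical extent at most $\epsilon(K)$; hence each individual rotated rectangle already lies in a horizontal strip of width $\epsilon(K)$. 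Therefore $P_{\ii,t}^K$ is a union of such thin horizontal bands intersected with $B(0,1)$, and the \emph{only} obstruction to it being an $\epsilon(K)$-pattern — i.e.\ to being of the form $(\R \times \bigcup I_i) \cap B(0,1)$ with each $I_i$ an interval of length $< \epsilon(K)$ — is that some rotated rectangle fails to span the full horizontal width of $B(0,1)$, so that the band it contributes is not all of $\R$ in the first coordinate. In other words, the bad event forces the existence of $\jj \in I^K$ with $E_{\iin{t}\jj} \cap B(\ii,t) \ne \emptyset$ whose construction rectangle, rescaled by $1/t$, is horizontally shorter than the diameter of $B(0,1)$; equivalently $h_{\iin{t}}(\jj) < 2t$ (up to the constant coming from the diameter of the ball being $2$).

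Next I would use condition \eqref{kaksi} of Lemma \ref{good part} to rule this out for $\ii \in E_\roo$. Apply that lemma with the constant $D$ in place of its $D$: there is $N(D) \in \N$ so that for $\ii \in E_\roo$ and any $t$ with $n(\ii,t) = n \ge N(D)$, writing $k$ for the integer with $N(D)k \le n < N(D)(k+1)$, one has $h_{\iin{t}}(\jj') > 2D\alpha_2(\iin{n})$ for all $\jj' \in I^k$. Since $h$ is monotone under extending words in the sense that $h_{\iin{t}}(\jj) \ge L^{-1}\alpha_1(\iin{t})\la^{|\jj|}$ grows as the cylinder shrinks, and since $K$ is fixed while $k \to \infty$ as $n \to \infty$, for $n$ large enough we have $k \ge K$, and then every $\jj \in I^K$ has $h_{\iin{t}}(\jj) > 2D\alpha_2(\iin{n}) > 2t$ by the assumption $D\alpha_2(\iin{n}) > t$ defining $A_{n,K}(D)$. (More carefully: take an arbitrary extension $\jj' \in I^k$ of $\jj$; the rectangle $R_{\iin{t}}(\jj) \supset R_{\iin{t}}(\jj')$ so $h_{\iin{t}}(\jj) \ge h_{\iin{t}}(\jj')> 2D\alpha_2(\iin{n})$.) Hence for all $\ii \in E_\roo$ and all sufficiently small $t$, every rotated rectangle appearing in $P_{\ii,t}^K$ spans the full horizontal extent of $B(0,1)$, so $P_{\ii,t}^K$ is an $\epsilon(K)$-pattern; combined with Lemma \ref{height2}, this means $A_{n,K}(D) = \emptyset$ for all $n$ beyond some $n_1 = n_1(D,K,\roo)$.

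Once $A_{n,K}(D) = \emptyset$ for all large $n$, the limsup $B_K(D) = \limsup_n A_{n,K}(D)$ is contained in $\bigcap_{N} \bigcup_{n \ge N} A_{n,K}(D) = \emptyset$, so in particular $\nu_p(B_K(D)) = 0$, which is the claim. Note this argument is carried out inside the fixed set $E_\roo$ of measure at least $1-\roo$, but since the conclusion $\nu_p(B_K(D)) = 0$ is already the sharp statement of the lemma, no further optimization over $\roo$ is needed here — the genuine role of $\roo$ will surface only later, when passing from the approximative sceneries to the actual modified tangents. The main obstacle is the bookkeeping in the previous paragraph: one must check carefully that "$P_{\ii,t}^K$ is not an $\epsilon(K)$-pattern" really is equivalent (given Lemma \ref{height2}) to the horizontal-spanning failure of some level-$K$ rectangle, paying attention to the fact that the construction rectangles may overlap — but overlapping only makes the union \emph{larger}, hence closer to a full product $(\R \times \bigcup I_i)\cap B(0,1)$, so overlaps cannot create the obstruction and the equivalence goes through; and to the off-by-one matching of the scale threshold $D\alpha_2(\iin{n}) > t$ with the constant $2D$ appearing in Lemma \ref{good part}\eqref{kaksi}, which is exactly why that lemma was stated with the factor $2D$ rather than $D$.
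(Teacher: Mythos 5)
Your reduction of the bad event is where the argument breaks. You claim that, given Lemma \ref{height2}, the only way $P^K_{\ii,t}$ can fail to be an $\epsilon(K)$-pattern is that some level-$K$ rectangle is horizontally \emph{shorter} than the screen, i.e.\ $h_{\iin{t}}(\jj)<2t$. That implication is false: a rectangle of horizontal length far greater than $2t$ still fails to span $B(0,1)$ after rescaling whenever one of its two vertical edges happens to land inside the screen, i.e.\ within horizontal distance $t$ of $x$. The correct statement (and the one the paper uses) is only that failure \emph{implies} some vertical edge of a level-$(n+K)$ rectangle lies within horizontal distance $t$ of $x$ — a condition on the \emph{position} of the rectangle relative to $x$, not on its length. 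Consequently your conclusion that $A_{n,K}(D)=\emptyset$ for all large $n$ is unjustified and in fact generally false: at every level there are points sitting horizontally close to an endpoint of some long thin rectangle, and for those points the approximative scenery contains a band that stops partway across the screen. If $A_{n,K}(D)$ really were eventually empty, the lemma would be trivial; it is not.

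What is missing is the entire measure estimate, which is the substance of the proof. One covers $\pi(A_{n,K}(D))$ by cylinders of ``forbidden words'' $\ii\jj$ with $|\ii|=n$, $|\jj|=k$ (where $N(D)k\le n<N(D)(k+1)$), namely those whose rectangle $R_{\ii}(\jj)$ meets one of the $4m^K$ vertical lines placed at horizontal distance $D\alpha_2(\ii)$ from the vertical edges of the level-$K$ rectangles; the bound $h_{\ii}(\jj)>2D\alpha_2(\ii)>t$ from Lemma \ref{good part}\eqref{kaksi} — whose role is exactly this, not the one you assign it — forces the rectangle containing $x$ to cross one of these lines. Then Remark \ref{rem:noline} (no line meets all first-level sub-rectangles) gives that a fixed vertical line meets cylinders of relative mass at most $(1-\underline p)$ per level, hence $\nu_p(A_{n,K}(D))\le 4m^K(1-\underline p)^k$, which is summable in $n$ since $k\ge n/N(D)-1$; only then does Borel--Cantelli apply. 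Your observation that overlaps of construction rectangles cannot create the obstruction is correct but does not repair the mischaracterization of the obstruction itself.
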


\begin{proof}
  Fix $D$ and $K$, and let $N(D)$ be as in Lemma \ref{good part}. Our plan is to prove that 
  \begin{equation*}
  \sum_{n=1}^\infty \nu_p(A_{n,K}(D))<\infty
  \end{equation*}
  since then the claim follows from the Borel-Cantelli lemma. To that end, we estimate the measure 
of the sets $A_{n,K}(D)$. 

  \begin{figure}[t]
  \includegraphics{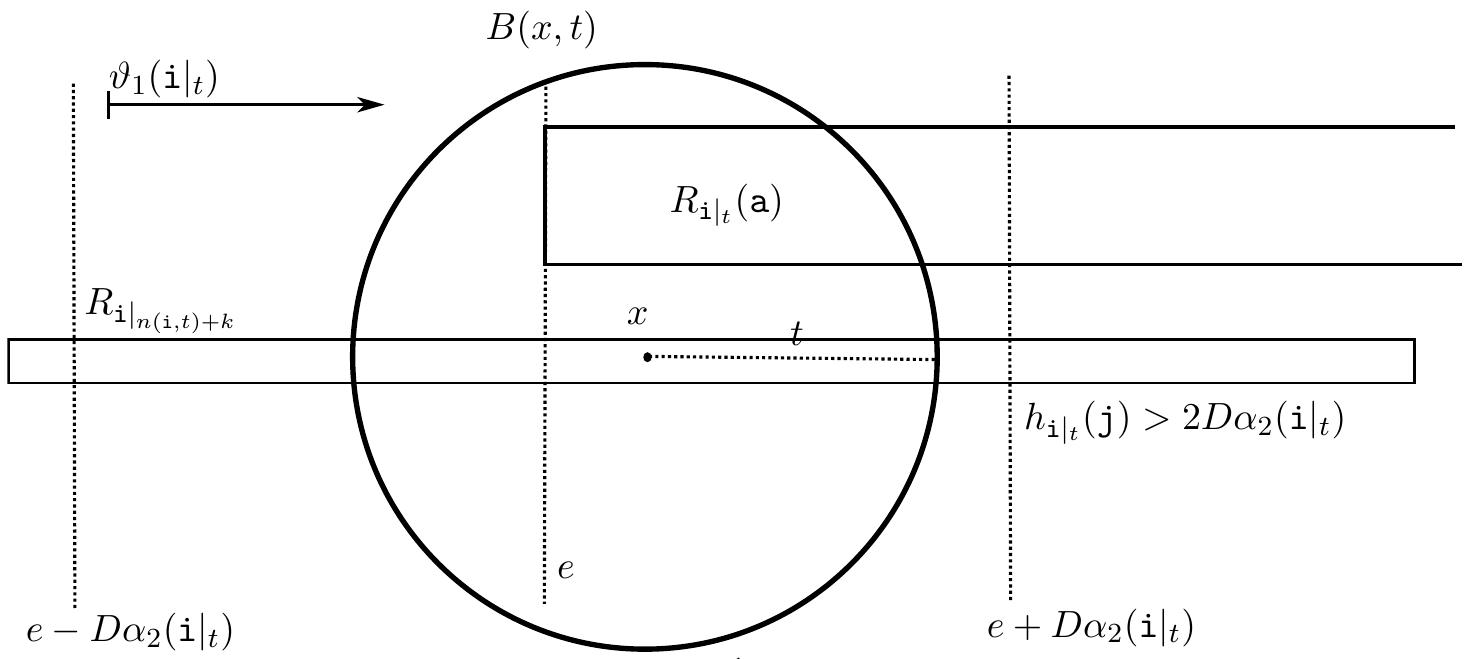}
  \caption{The point $x$ belongs to a forbidden rectangle because within distance $t$ away in the 
horizontal direction an endpoint $a$ of level $n+K$ construction rectangle $R_{\ii|_t}(\aa)$ 
appears.}
  \label{genforb}
  \end{figure}

  We will cover the set $A_{n,K}(D)$ by construction cylinders corresponding to forbidden words 
which will be defined shortly. Throughout, we are considering the situation in the singular basis 
and thus, we shall refer to the directions $\vartheta_1(\iin{n})$ and $\vartheta_2(\iin{n})$ as 
horizontal and vertical, respectively. This should not be a cause of confusion, as the basis in use 
is clear from the context. The {\bf forbidden words} are defined in the following way: For any 
$k\in\N$ and $|\ii|=n$ with $N(D) k\leq n< N(D) (k+1)$ the word $\ii\jj$ is $n$-forbidden, if 
$|\jj|=k$ and $R_{\ii}(\jj)$ intersects any of the vertical line segments $\{ c \} \times \R$ where 
  \begin{align*}
    c\in\{e \pm D\alpha_2(\ii) : \;&e\text{ is an $x$ coordinate of any of the $2m^K$} \\
    &\text{vertical edges of the rectangles } R_{\ii}(\aa) \text{ with } \aa\in I^K\}.
  \end{align*}
  Now fix a point $x=\pi(\uu)\in\pi (A_{n,K}(D))$. Then there is $t$ such that $n(x,t)=n$ and 
$P^K_{\uu,t}$ is not an $\epsilon(K)$-pattern. By Lemma \ref{height2}, for each $|\aa|=K$, the 
rectangles $R_{\uu|_n}(\aa)$ have height at most $\epsilon(K)t$. Thus the only way the 
approximative 
scenery $P^K_{\uu,t}$ around $x$ is not an $\epsilon(K)$-pattern is if some endpoint of a rectangle 
is in the $t$-screen $B(x,t)$. More precisely, this means that within distance $t$ from $x$ in the 
horizontal direction, there is an endpoint of a rectangle from level $n+K$. By Lemma \ref{good 
part}\eqref{kaksi} and the definition of $A_{n,K}(D)$,
  \[
    h_{\uu|_{n}}(\uu(n+1),\ldots,\uu(n+k)) > 2D \alpha_2(\uu|_n) > D \alpha_2(\uu|_n) >t,
  \]
  so that in this case the rectangle $R_{\uu|_n}(\uu(n+1),\ldots,\uu(n+k))$ necessarily intersects 
one of the forbidden line segments and hence $\uu|_{n + k}$ is an $n$-forbidden word. See Figure 
\ref{genforb}. By this argument, we see that $\pi(A_{n,K}(D))$ is covered by $n$-forbidden 
rectangles, as claimed. 

  Now, let $\ii\in I^n$. By Remark \ref{rem:noline}, no line in any direction in the rectangle 
$R_{\ii}$ intersects all the sub-rectangles $R_{\ii}(j)$ of level $n+1$. Therefore, the relative 
total mass of the sub-rectangles $R_{\ii}(j)$ that the line intersects in the vertical direction is 
at most $(1-\underline p)$. By the self-affinity and properties of the Bernoulli measures, we see 
that the relative total mass of the level $n+2$ sub-rectangles, which the vertical line intersects 
is at most $(1-\underline p)^2$. Iterating this, and noticing that there are $4m^K$ forbidden line 
segments, we get that
  \begin{align}
  \label{forbiddenmeasure}
  \nu_p(A_{n,K}(D))
  &\leq \sum_{\ii\jj\textrm{ is $n$-forbidden}}\nu_p(E_{\ii\jj})
    \leq \sum_{\ii\in I^n}\nu_p(E_\ii) (1-\underline p)^{k} 4m^K
    = (1-\underline p)^{k} 4m^K
  \end{align}
  for $N(D)k\leq n <N(D)(k+1)$. Thus
  \[
  \sum_{n=1}^{\infty} \nu_p(A_{n,K}(D)) \leq \sum_{n=1}^{N(D)-1} \nu_p(A_{n,K}(D))
  +N(D)\frac{(1-\underline p) 4m^K}{1-(1-\underline p)}<\infty
  \]
  finishing the proof.
\end{proof}

\begin{lemma} \label{projectionlemma}
  Under the assumptions of Theorem \ref{main}, for every $K \in \N$ and for almost every $\iii \in 
E_\roo$ there exists $t_0>0$ such that $d_H(P^K_{\ii,t},M_{\ii,t})<  5 \sqrt{\eps(K)}$ for all 
$0<t<t_0$.
\end{lemma}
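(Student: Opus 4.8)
The plan is to treat the two one-sided Hausdorff distances separately. One of them vanishes: since $n=n(\ii,t)$ was chosen so that $B(\ii,t)$ meets only the single level $n$ construction cylinder $E_{\iin{t}}=\bigcup_{\jj\in I^K}E_{\iin{t}\jj}$, every point of $E\cap B(\ii,t)$ lies in some $E_{\iin{t}\jj}\subseteq R_{\iin{t}}(\jj)$ with $\jj\in R(\ii,t,K)$, and applying the isometry $\OO_{\ii,t}$ and the homothety $\ZZ_{\pi(\ii),t}$ gives $M_{\ii,t}\subseteq P^K_{\ii,t}$. Hence $d_H(P^K_{\ii,t},M_{\ii,t})=\sup_{z\in P^K_{\ii,t}}\dist(z,M_{\ii,t})$, and it remains to show that every $w\in R_{\iin{t}}(\jj)\cap B(\ii,t)$ with $\jj\in R(\ii,t,K)$ lies within $5t\sqrt{\eps(K)}$ of $E\cap B(\ii,t)$; since the Hausdorff distance of subsets of $B(0,1)$ never exceeds $2$, the statement is automatic for the finitely many $K$ with $\eps(K)\ge 4/25$, so we may assume $\eps(K)$ as small as we wish.

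I would fix $\ii$ in the full measure subset of $E_\roo$ on which the limits $\overline{\vartheta}_k(\ii)$ exist (Lemma \ref{Oselemma}), the projection condition \eqref{as:projection} holds, and $\tfrac1n\log(\alpha_1(\iin{n})/\alpha_2(\iin{n}))\to\lambda_2-\lambda_1$, and work in the singular basis $(\vartheta_1(\iin{t}),\vartheta_2(\iin{t}))$, where $R_{\iin{t}}(\jj)$ is the axis-parallel rectangle of width $h_{\iin{t}}(\jj)$ and height $v_{\iin{t}}(\jj)\le t\eps(K)$ (the height bound holding for $t$ small by Lemma \ref{height2}). The key point is that $E_{\iin{t}\jj}$ is horizontally almost dense in this rectangle: by \eqref{as:projection} the set $\proj_{\overline{\vartheta}_1(\ii)}(E_{\iin{t}\jj})$ is a genuine interval, so when $E_{\iin{t}\jj}$ is reprojected onto the nearby direction $\vartheta_1(\iin{t})$ its image fails to fill the $h_{\iin{t}}(\jj)$-long convex hull only by a gap $g=O(h_{\iin{t}}(\jj)\sin^2\phi_t+v_{\iin{t}}(\jj))$, where $\phi_t$ is the angle between $\vartheta_1(\iin{t})$ and $\overline{\vartheta}_1(\ii)$; here $\sin^2\phi_t$ rather than $\sin\phi_t$ appears because the tilt enters twice, once in passing from $\overline{\vartheta}_1(\ii)$ to $\vartheta_1(\iin{t})$ and once in bounding the perpendicular extent of $E_{\iin{t}\jj}$ by $O(h_{\iin{t}}(\jj)\sin\phi_t+v_{\iin{t}}(\jj))$. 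Granting that $g\le t\eps(K)$ for $t$ small (see below), I would take for $w$ the witness $e\in E_{\iin{t}\jj}$ whose horizontal coordinate is the nearest projected point to a slightly inward perturbed copy of that of $w$; then $e$ differs from $w$ by at most $g$ horizontally and $v_{\iin{t}}(\jj)\le t\eps(K)$ vertically, and the perturbation — of size $\lesssim t\sqrt{\eps(K)}$, since the flat rectangle $R_{\iin{t}}(\jj)$ meets the disk $B(\ii,t)$ in a region which near the two ends of the chord is pinched to horizontal width $\lesssim t\sqrt{\eps(K)}$ — keeps $e$ inside $B(\ii,t)$. Summing, $|w-e|<5t\sqrt{\eps(K)}$, and dividing by $t$ gives the claim.

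The step I expect to be the main obstacle is the bound $g\le t\eps(K)$, because $R_{\iin{t}}(\jj)$ is very flat vertically but typically far wider than the screen $B(\ii,t)$, so $h_{\iin{t}}(\jj)/t\to\infty$ as $t\to0$ and the factor $\sin^2\phi_t$ must beat this. I would use the separation condition to get $t\ge\alpha_2(\iin{t})\delta$, the elementary bound $h_{\iin{t}}(\jj)\le\alpha_1(\iin{t})\ua^K\diam(E)$, and the estimate $\sin\phi_t\le C\sum_{n\ge n(\ii,t)}\alpha_2(\iin{n})/\alpha_1(\iin{n})$ read off from the proof of Lemma \ref{Oselemma}; on $E_\roo$ Lemma \ref{good part}\eqref{yksi} gives $\alpha_1(\iin{n})/\alpha_2(\iin{n})>a^n$ with $a=e^{\gamma-\theta}$, whence $\sin\phi_t\lesssim a^{-n(\ii,t)}$ and therefore
\[
  \frac{h_{\iin{t}}(\jj)\,\sin^2\phi_t}{t}\;\lesssim\;\frac{\ua^K\diam(E)}{\delta}\cdot\frac{\alpha_1(\iin{t})}{\alpha_2(\iin{t})}\,a^{-2n(\ii,t)}\;\le\;\frac{\ua^K\diam(E)}{\delta}\,e^{n(\ii,t)(\lambda_2-\lambda_1-2(\gamma-\theta)+o(1))}.
\]
So the argument goes through provided the parameters of Lemma \ref{good part} are taken with $\gamma-\theta>\tfrac12(\lambda_2-\lambda_1)$, which one is free to do; then the right-hand side tends to $0$ as $t\to0$, in particular falls below $t\eps(K)$ once $t$ is small, and $g\le t\eps(K)$ follows from the bound $g=O(h_{\iin{t}}(\jj)\sin^2\phi_t+v_{\iin{t}}(\jj))$ together with Lemma \ref{height2}. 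Everything else is elementary plane geometry.
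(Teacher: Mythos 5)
Your proposal is correct in outline and shares the paper's skeleton (the trivial inclusion $M_{\ii,t}\subset P^K_{\ii,t}$, the use of the projection condition together with the flatness $v_{\iin{t}}(\jj)\le t\eps(K)$, and a separate $\sqrt{\eps(K)}$-loss near the boundary of the screen), but the central approximation step is executed along a genuinely different and considerably harder route. You locate a witness in $E_{\iin{t}\jj}$ by matching coordinates along the \emph{long} axis $\vartheta_1(\iin{t})$, which forces you to quantify the gap $g$ left when the $\overline{\vartheta}_1(\ii)$-projection (an interval) is reprojected onto $\vartheta_1(\iin{t})$, and then to win a race between $h_{\iin{t}}(\jj)\sin^2\phi_t$ and $t$ using the exponential decay of the angles from the proof of Lemma \ref{Oselemma} and the tuning $\gamma-\theta>\tfrac12(\lambda_2-\lambda_1)$. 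The paper avoids this entirely: given $x$ in the rectangle, it intersects $E_{\iin{t}\jj}$ with the line through $\ZZ_{\pi(\ii),t}^{-1}(x)$ in the direction $\overline{\vartheta}_2(\ii)$; the projection condition guarantees this line meets $E_{\iin{t}\jj}$, and since both the intersection point and $\ZZ_{\pi(\ii),t}^{-1}(x)$ lie in a rectangle of height $v_{\iin{t}}(\jj)\le t\eps(K)$ in the direction $\vartheta_2(\iin{t})$, and the line is nearly parallel to $\vartheta_2(\iin{t})$, the displacement is at most $2t\eps(K)$ with no quantitative angle decay needed beyond $\sin\theta\ge\tfrac12$. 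In short, the ``main obstacle'' you identify is an artifact of matching along the long axis; it evaporates if the displacement is measured along the short one. Your route does work (your gap bound is in fact lossy --- since the rectangle is the tight bounding box in the singular basis, one gets $g=O(v_{\iin{t}}(\jj)\tan\phi_t)$ directly, with no $h$-term at all), but it makes the lemma depend on how $\theta,\gamma$ were chosen in Lemma \ref{good part}; if you want to keep your argument, it is cleaner to drop the tuning and use instead the pointwise convergence $\tfrac1n\log(\alpha_1(\iin{n})/\alpha_2(\iin{n}))\to\lambda_2-\lambda_1$, which you already list among your genericity assumptions and which suffices because $t_0$ may depend on $\ii$.

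Two smaller points. First, your boundary analysis is incomplete in the case where the rescaled rectangle lies entirely in the annulus $B(0,1)\setminus B(0,1-2\eps(K))$: there the ``nearest horizontal match to an inward-perturbed copy of $w$'' may land outside the screen (the point of $E_{\iin{t}\jj}\cap B(\ii,t)$ guaranteed by $\jj\in R(\ii,t,K)$ can sit at the opposite end of a cap of width $O(t\sqrt{\eps(K)})$), and you must fall back to taking an arbitrary point of $M_{\ii,t}$ in that rectangle and bounding the horizontal offset by the chord length, which is exactly the paper's second case. Second, your implied constants in ``$\lesssim$'' are never tracked, so the literal constant $5$ is not established; this is harmless for the application (only $d_H(P^K_{\ii,t},M_{\ii,t})\to 0$ as $K\to\infty$ is used later), but as stated the lemma asserts the specific bound $5\sqrt{\eps(K)}$.
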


\begin{proof}
  Let $\ii\in\IN$ be generic in the sense that $\overline \vartheta_1(\ii)$ from Lemma 
\ref{Oselemma} exists. Now $t_0>0$ is determined from $n_0$ of the assumption \eqref{as:projection} 
in Theorem \ref{main} and from Lemmas \ref{Oselemma} and \ref{height2} so that for all $0<t<t_0$ 
and 
$\jjj \in I^*$ the projection $\proj_{\overline{\vartheta}(\iii)}(E_{\iii|_t \jjj})$ is a line 
segment, $\vartheta_1(\iii|_t)$ is close to $\overline{\vartheta}_1(\iii)$, and the height of 
$R_\iii(\jjj)$ is at most $\eps(K)$ for all $\jjj \in I^K$. Notice that $M_{\ii, t}\subset P^K_{\ii, 
t}$. Fix a point $x\in P^K_{\ii, t}$ and let $\jjj \in I^K$ be such that $x \in \OO_{\ii, 
t}\ZZ_{\pi(\ii), t} R_{\iin{ t}}(\jj)$. We now wish to prove that within $5 \sqrt{\eps(K)}$ away 
from $x$ there is a point $y\in M_{\ii, t}$. The study divides into three separate cases; see Figure 
\ref{Lemma37}.

\begin{figure}[t]
\includegraphics{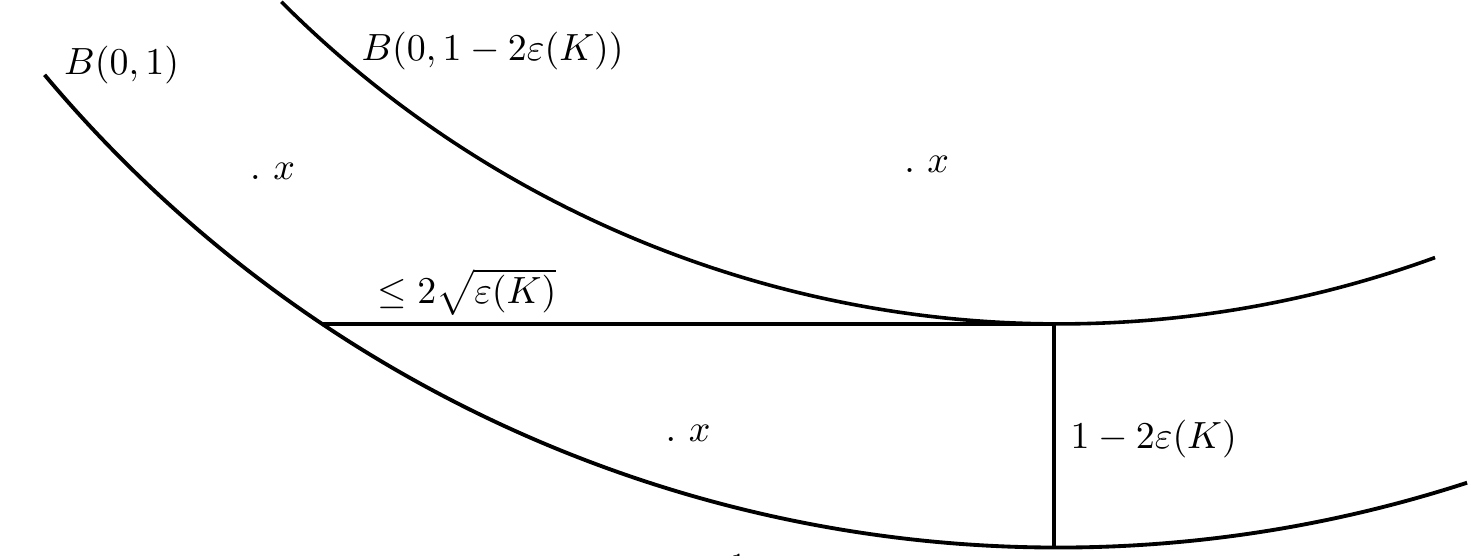}
\caption{The picture indicates the possible positions of $x$ in the proof of Lemma 
\ref{projectionlemma}.}
\label{Lemma37}
\end{figure}

  If $x$ is far from the boundary of the screen, we can find the point $y$ on the line $\ell$ 
crossing through $\ZZ_{\pi(\ii),t}^{-1}(x)$ in direction $\overline\vartheta_2(\ii)$. Assume that 
$x\in B(0,1-2\eps(K))$, say, and denote by $\theta$ the angle between $\overline \vartheta _1(\ii)$ 
and $\vartheta_2(\iin{t})$. By Lemma \ref{Oselemma}, we may assume that $\sin(\theta)\ge 
\frac{1}{2}$. By assumption \eqref{as:projection} of Theorem \ref{main}, there exists $z\in 
\ell\cap 
E_{\iin{t}\jj}$, as the projection $\proj_{\overline \vartheta_1(\ii)}(E_{\iin{t}\jj})$ is 
connected. By Lemma \ref{height2}, we get that $v_{\iin{t}}(\jj)\leq t\eps(K)$. Thus 
$\dist(\ZZ_{\pi(\ii),t}^{-1}(x),z)\leq 2t\eps(K)$, and so $z\in B(\pi(\ii),t)$ by the triangle 
inequality. When zooming out, we get $\dist(x,y)\leq 2\eps(K)$ for the point 
$y=\ZZ_{\pi(\ii),t}(z)\in M_{\ii, t}$.

If $x$ is very close to the boundary of the screen, it is possible that the above reasoning gives 
a point $y$ which is not inside the screen. These points need to be dealt differently. Consider 
first the case $\OO_{\ii, t}\ZZ_{\pi(\ii), t} R_{\iin{ t}}(\jj) \cap B(0,1-2\eps(K))=\emptyset$. 
Let 
$y \in M_{\ii, t} \cap \OO_{\ii, t}\ZZ_{\pi(\ii), t} R_{\iin{ t}}(\jj)$ be arbitrary -- such a 
point 
exists by the definition of $P^K_{\ii, t}$. Through estimating the length of line segments 
contained 
in the annulus $B(0, 1)\setminus B(0, 1-2\eps(K))$, we can bound the horizontal distance between 
$x$ 
and $y$ by $4 \sqrt{\eps(K)}$. Thus $\dist(x,y)\leq 4 \sqrt{\eps(K)}+\eps(K)$.
  
To finish the considerations, let $x\not\in B(0,1-2\eps(K))$ but assume that $\OO_{\ii, 
t}\ZZ_{\pi(\ii), t} R_{\iin{ t}}(\jj)\cap B(0,1-2\eps(K))\neq\emptyset$. Then there is a point 
$x'\in B(0,1-2\eps(K))$ so that, as for $x$ and $y$ above, $\dist (x, x')\le 2\sqrt{\eps(K)}$. For 
this $x'$ we find $y\in M_{\ii, t}$ as in the first case, with $\dist(x',y)\leq 2 
\eps(K)$. Thus $\dist (x, y)\le 2\sqrt{\eps(K)} + 2\eps(K)$. Since $\eps(K)\leq\sqrt{\eps(K)}$ we 
have now finished the proof.
\end{proof}

We now combine the above lemmas to prove Lemma \ref{lem:modifiedtangents}. After that we are ready 
to prove the main theorem. 

\begin{lemma}\label{lem:modifiedtangents}
  Under the assumptions of Theorem \ref{main}, for $\nu_p$-almost all $\iii \in E_\roo$, the 
modified tangent sets at $\pi(\iii)$ are either of the form $(\R\times C)\cap B(0,1)$ where $C$ is 
a 
closed porous set, or of the form $(\ell\times \{0\})\cap B(0,1)$, where $\ell$ is an interval 
containing at least one of the intervals $[-1,0]$ and $[0,1]$.
\end{lemma}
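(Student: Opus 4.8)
The plan is to analyse an arbitrary zooming sequence and split according to how the radius of the screen compares with the shorter semiaxis of the construction cylinder that the screen sees. First I would fix the good set: by Lemmas \ref{Oselemma}, \ref{lem:bad set} (applied for all $D,K\in\N$) and \ref{projectionlemma} (applied for all $K\in\N$) there is $G_\roo\subseteq E_\roo$ with $\nu_p(E_\roo\setminus G_\roo)=0$ such that for every $\iii\in G_\roo$ the direction $\overline\vartheta_1(\iii)$ exists, for every $D,K$ one has $\iii\notin A_{n,K}(D)$ for all large $n$, and for every $K$ one has $d_H(P^K_{\iii,t},M_{\iii,t})<5\sqrt{\eps(K)}$ for all sufficiently small $t$. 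Fix $\iii\in G_\roo$ and a modified tangent $T$, say $M_{\iii,t_j}\to T$ with $t_j\downarrow0$, and write $n_j=n(\iii,t_j)$; then $n_j\to\infty$, and since $\pi(\iii)\in E\cap B(\pi(\iii),t_j)$ we get $0\in M_{\iii,t_j}$ for all $j$, hence $0\in T$. Because $B(\pi(\iii),t_j)$ meets at least two level $n_j+1$ cylinders, which are separated by at least $\alpha_2(\iii|_{n_j})\delta$, while meeting a single level $n_j$ cylinder, the strong separation condition yields $\alpha_2(\iii|_{n_j})\delta\le t_j\le\diam(E_{\iii|_{n_j-1}})\lesssim\alpha_1(\iii|_{n_j})$. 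Passing to a subsequence I may assume $t_j/\alpha_2(\iii|_{n_j})$ converges, possibly to $+\infty$, and I treat the two cases separately.

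If $\limsup_j t_j/\alpha_2(\iii|_{n_j})<\infty$, choose $D\in\N$ with $t_j<D\alpha_2(\iii|_{n_j})$ for all large $j$. Then for each fixed $K$ the pair $(t_j,n_j)$ would witness $\iii\in A_{n_j,K}(D)$ if $P^K_{\iii,t_j}$ failed to be an $\eps(K)$-pattern, so since $\iii\notin A_{n_j,K}(D)$ for large $j$, each $P^K_{\iii,t_j}$ is an $\eps(K)$-pattern, and by the choice of $G_\roo$ it is within $5\sqrt{\eps(K)}$ of $M_{\iii,t_j}$. Using Blaschke selection and passing to a further subsequence, the corresponding $\eps(K)$-patterns converge to a compact set which, being a Hausdorff limit of sets of the form $(\R\times V)\cap B(0,1)$, again has this form and lies within $5\sqrt{\eps(K)}$ of $T$. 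Letting $K\to\infty$ so that $\eps(K)=L\delta^{-1}\ua^K\to0$, I conclude $T=(\R\times C)\cap B(0,1)$ with $C:=\proj_y(T)$ closed in $[-1,1]$. Finally $T$ is porous by Lemma \ref{lem:convergence}, and testing porosity of $T$ at the interior points $(0,y)$ with $y\in C$ and $|y|<1$ transfers porosity to $C$. This is the first alternative.

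If instead $t_j/\alpha_2(\iii|_{n_j})\to\infty$, I would argue directly. In the singular basis $(\vartheta_1(\iii|_{n_j}),\vartheta_2(\iii|_{n_j}))$ in which $M_{\iii,t_j}$ is drawn, the cylinder $E_{\iii|_{n_j}}$ lies in its construction rectangle, whose vertical side has length at most $L\alpha_2(\iii|_{n_j})$; since $M_{\iii,t_j}$ is the zoomed image of $E_{\iii|_{n_j}}\cap B(\pi(\iii),t_j)$, it lies in a horizontal strip of height $\le L\alpha_2(\iii|_{n_j})/t_j\to0$ through $0$. Hence $T\subseteq(\R\times\{0\})\cap B(0,1)$, so $T=(\ell\times\{0\})\cap B(0,1)$ for a closed $\ell\subseteq[-1,1]$ with $0\in\ell$. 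It remains to see $\ell$ is an interval containing $[-1,0]$ or $[0,1]$, and here the projection condition enters: for large $j$ the set $\proj_{\overline\vartheta_1(\iii)}(E_{\iii|_{n_j}})$ is an interval $J_j$ containing $p_j:=\proj_{\overline\vartheta_1(\iii)}(\pi(\iii))$, of length at least a fixed fraction of $t_j$. Using that the tilt between $\vartheta_1(\iii|_{n_j})$ and $\overline\vartheta_1(\iii)$ is small — quantified through the summability of the angles from the proof of Lemma \ref{Oselemma} — and that $E_{\iii|_{n_j}}$ is vertically thin on the scale $t_j$, every value of $J_j$ lying in $[p_j-t_j,p_j+t_j]$ is attained by a point of $E_{\iii|_{n_j}}\cap B(\pi(\iii),t_j)$, so after zooming out $M_{\iii,t_j}$ fills the corresponding horizontal segment. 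Since $B(\pi(\iii),t_j)$ meets only the cylinder $E_{\iii|_{n_j}}$ at level $n_j$, this segment must reach $-1$ or $1$, and letting $j\to\infty$ gives $\ell\supseteq[-1,0]$ or $\ell\supseteq[0,1]$.

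The main obstacle is the exceptional case. One must simultaneously control (i) the vertical thinness of $E_{\iii|_{n_j}}$ relative to $t_j$, (ii) the small angle between the singular direction $\vartheta_1(\iii|_{n_j})$ in which the screen is rotated and the limiting direction $\overline\vartheta_1(\iii)$ in which the projection hypothesis is phrased, and (iii) the possibility that $B(\pi(\iii),t_j)$ overshoots $E_{\iii|_{n_j}}$ horizontally; the delicate point is to rule out that the screen overshoots the interval $J_j$ on both sides, since otherwise $T$ would be a segment failing to reach the boundary, which is incompatible with the pattern structure that Lemmas \ref{lem:bad set} and \ref{projectionlemma} impose once one rescales to the comparable smaller scales inside $B(\pi(\iii),t_j)$. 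By contrast, the porosity transfer from $T$ to $C$ and the bookkeeping with Hausdorff limits of $\eps(K)$-patterns in the generic case are routine.
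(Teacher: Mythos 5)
Your overall strategy is the same as the paper's: the dichotomy according to whether $t_j/\alpha_2(\iii|_{n_j})$ stays bounded or tends to infinity, the use of Lemmas \ref{lem:bad set} and \ref{projectionlemma} to realize the modified tangent as a Hausdorff limit of $\eps(K)$-patterns in the bounded case, and the transfer of porosity from $T$ to $C$ via Lemma \ref{lem:convergence}. That half of your argument is correct.

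The gap is in the degenerate case $t_j/\alpha_2(\iii|_{n_j})\to\infty$, exactly at the point you flag as delicate. Deducing $T\subset(\R\times\{0\})\cap B(0,1)$ and that $M_{\iii,t_j}$ fills the horizontal segment over $J_j\cap[p_j-t_j,p_j+t_j]$ is fine, but your claim that this segment ``must reach $-1$ or $1$'' does not follow from the fact that $B(\pi(\iii),t_j)$ meets only the cylinder $E_{\iii|_{n_j}}$ at level $n_j$: a priori $E_{\iii|_{n_j}}$ could lie entirely inside $B(\pi(\iii),ct_j)$ for some $c<1$ (its horizontal extent is bounded below only by a small constant multiple of $t_j$, coming from $t_j\le\diam(E_{\iii|_{n_j-1}})\lesssim\la^{-1}\alpha_1(\iii|_{n_j})$), in which case $\ell$ would be a short interval around $0$ and the conclusion would fail. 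Your fallback --- that such a configuration is ``incompatible with the pattern structure once one rescales to comparable smaller scales'' --- is not an argument: Lemmas \ref{lem:bad set} and \ref{projectionlemma} applied at other scales $t'\neq t_j$ constrain $M_{\iii,t'}$, not $M_{\iii,t_j}$, and impose nothing on the limit along the given sequence $(t_j)$. What is actually needed, and what the paper does, is to invoke the projection condition one construction level up: since $\proj_{\overline{\vartheta}_1(\iii)}(E_{\iii|_{n_j-1}})$ is an interval properly containing $J_j$ (for $n_j$ large), at least one horizontal ending of $E_{\iii|_{n_j}}$ admits a line in direction $\overline{\vartheta}_2(\iii)$ through it which also meets a sibling cylinder $E_{\iii|_{n_j-1}j'}$ with $j'\neq\iii(n_j)$, at distance $O(\alpha_2(\iii|_{n_j-1}))$ along that line. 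Since $B(\pi(\iii),t_j)$ must avoid $E_{\iii|_{n_j-1}j'}$, that ending lies at distance at least $t_j-O(\alpha_2(\iii|_{n_j}))$ from $\pi(\iii)$, and since $\alpha_2(\iii|_{n_j})/t_j\to 0$ the ending is pushed to the boundary of the screen, giving $[-1,0]\subset\ell$ or $[0,1]\subset\ell$. Without this step the degenerate case is not closed.
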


\begin{proof}
  Recall that by Lemma \ref{lem:bad set}, the set $B(M)= \bigcup_{K=0}^\infty B_K(M)$ has zero 
measure for all $M\in \N$. Fix a point $\ii \in E_\roo\setminus \bigcup_{M=1}^\infty B(M)$, and a 
modified tangent set $T=\lim_{k\to\infty}M_{\ii, t_k}$ at $x=\pi(\ii)$. We assume that $\ii$ is 
generic, in the sense that $\overline\vartheta_1(\ii)$ from Lemma \ref{Oselemma} exists.

  Now, there are two options. Either 
  \begin{equation} \label{tozero}
    \lim_{k\to \infty} \frac{ \alpha_{2}(\iin{ n(\ii,t_k) } )}{t_k}= 0,
  \end{equation}
  or there is $D>0$ and infinitely many $k$ such that 
  \begin{equation} \label{nottozero}
    \frac{ \alpha_{2}(\iin{ n(\ii,t_k) }) }{t_k}>\frac {1}{D}. 
  \end{equation}
  We will first consider the situation where \eqref{tozero} holds. For the time being, fix $t>0$ 
and 
the corresponding $n=n(\ii, t)$ so that $n-1 \ge n_0$, where $n_0$ is as in the assumption 
\eqref{as:projection} of Theorem \ref{main}, and assume that both endings of the construction 
rectangle $R_{\iin{n}}$ can be seen in the screen. It might be that one of the endings of 
$\proj_{\overline{\vartheta}_1(\ii)}(E_{\iin{n}})$ is at the end of the line segment 
$\proj_{\overline{\vartheta}_1(\ii)}(E_{\iin{n-1}})$, but not both of them. Thus, by the assumption 
\eqref{as:projection} of Theorem \ref{main}, for at least one of the endpoints of $R_{\iin{n}}$, a 
line $\ell$ in the direction $\overline{\vartheta}_2(\ii)$ through the endpoint necessarily also 
intersects another cylinder $E_{\iin{n - 1}j}$ for some $j \in \{ 1, \ldots, m \}$. This is the 
case, since otherwise there is a hole in the projection of $E_{\iin{n-1}}$ onto the line in 
direction $\overline{\vartheta}_1(\ii)$. 

  Let $d$ denote the distance from $x$ to that ending of the rectangle $R_{\iin{n}}$ and let $e$ 
denote the distance between $E_{\iin{n}}$ and $E_{\iin{n-1}j}$ along the line $\ell$. Notice that 
$e$ is bounded from above by $v_{\iin{n-1}}/\cos \theta_n$, where $\theta_n$ is the angle between 
$\vartheta_2(\iin{n-1})$ and $\overline \vartheta_2(\ii)$. By Lemma \ref{Oselemma}, taking $t$ 
small 
enough we may assume that $\cos\theta_n\geq 1/2$, and hence $e\le 2v_{\iin{n-1}}$. We want to show 
that $d/t \to 1$ as $t\to 0$. Assume this is not the case, that is, $d<ct$ for some $c<1$. Then, 
since the screen $B(x,t)$ intersects only the cylinder $E_{\iin{n}}$, using Lemma 
\ref{heigthandlength},
  \[
  t\le \dist(x, E_{\iin{n-1}j})\le d+v_{\iin{n}} + e\le ct + v_{\iin{n}}+ 2v_{\iin{n-1}} \le ct + 
L(1+2\ua)\alpha_2(\iin{n}).
  \]
  This gives $t\le C\alpha_2(\iin{n})$ for some absolute constant $C>0$. Thus, for all small $t$, 
  \[
  \frac{\alpha_2(\iin{n(\ii, t)})}{t}>0. 
  \]
  These observations mean that if \eqref{tozero} holds, then either both endings of the 
construction 
rectangle $R_{\iin{t_k}}$ can be seen in the screen for only finitely many $k$, or at least one of 
the endpoints of the rectangles reaches the boundary of the screen $B(x, t_k)$ in the limit. 

  In the latter case, using Lemma \ref{heigthandlength}, $P_{x,t}^0$ consists of a strip with 
height 
converging to zero. By the argument of Lemma \ref{projectionlemma} and compactness, the modified 
tangent set $T$ is a horizontal line segment, containing at least one of the line segments 
$[1,0]\times\{0\}$ and $[-1,0]\times\{0\}$. 

  Assume now that at most one of the endings is seen in the screen, apart from maybe finitely many 
$(t_k)$. Since there are only two endings, there is a sub-sequence of $(t_k)$ so that along that 
sub-sequence, either the left or the right endpoint is never in the screen, and we can use the 
argument from the previous paragraph to deduce the same claim. Notice that because the limit set 
$T$ 
exists, it is unique, and thus it suffices to prove the convergence along a sub-sequence. 

  Let us then assume that \eqref{nottozero} holds, that is, $D \alpha_2(\ii|_{t_k})>t_k$ along a 
sub-sequence which we keep denoting by $(t_k)$. Let $M$ be an integer with $M\ge D$ and notice that 
$\ii\notin B(M)$. Fix $K$, and notice that also $\ii\notin B_K(M)$, so that $\ii\in A_{n,K}(M)$ for 
at most finitely many $n\in \N$. 

  For all $k$ it is the case that $M\alpha_2(\ii|_{t_k})\ge D \alpha_2(\ii|_{t_k})>t_k$, so that if 
there are only finitely many $k$ such that $P^K_{\ii, t_k}$ would be an $\epsilon(K)$-pattern, then 
$x\in A_{n(x,t_k), K}(M)$ for infinitely many $k$. Since $t_k\to 0$ implies $n(x, t_k)\to \infty$, 
this gives infinitely many $A_{n, K}$'s, which cannot be the case. Hence for all $K$ there is a 
$t_K$ such that $P^K_{\ii, t_K}$ is an $\epsilon(K)$-pattern and that Lemma \ref{projectionlemma} 
holds. Since there is a sequence of $\eps$-patterns converging to the modified tangent set $T$, it 
must be of the form $(\R \times C) \cap B(0,1)$ for some set $C \subset [-1,1]$. Since, by Lemma 
\ref{lem:convergence}, $T$ is closed and porous in $B(0,1)$, the same must hold for $C$ in $[-1,1]$.
\end{proof}

We are now ready to prove Theorem \ref{main}. 

\begin{proof}[Proof of Theorem \ref{main}]
  So far, in the previous lemmas, we have verified the claims for almost all points in the set 
$E_\roo$ having measure at least $1-\roo$. Since the claim of Lemma \ref{lem:modifiedtangents} does 
not depend on $\roo$ it actually holds for almost all points in $E$: If there is an exceptional set 
of positive measure, then we can repeat the argument for some $\roo$ smaller than half of the size 
of the exceptional set to get a contradiction.

  Let us now assume that $T$ is a tangent set of $E$ at $x=\pi(\ii)$, along a sequence $(t_n)$. By 
the above discussion, we may assume that Lemma \ref{lem:modifiedtangents} holds at $x$. By 
compactness, we find a sub-sequence, also denoted by $(t_n)$, so that $M_{\ii,t_n}\to F$ along this 
sub-sequence. By Lemma \ref{lem:modifiedtangents}, we know that $F$ is either of the form 
$(\R\times 
C) \cap B(0,1)$, where $C$ is a closed porous set, or of the form $(\ell\times \{0\}) \cap B(0,1)$ 
where $\ell$ is an interval with $[-1,0]\subset \ell$ or $[0,1]\subset \ell$. Let $\OO$ be the 
rotation taking $\overline{\vartheta}_1(\ii)$ to $(1,0)$, where $\overline\vartheta_1(\ii)$ is from 
Lemma \ref{Oselemma}. 

  It suffices to show that $T=\OO^{-1}F$. Since $\vartheta_1(\iin{t_n}) \to 
\overline{\vartheta}_1(\ii)$, the rotations $\OO_{\ii,t_n}$ converge to $\OO$. Let $\epsilon>0$ and 
choose $n_0$ so that $d_H( \OO T,\OO_{\ii,t_n}T)\leq \epsilon$, $d_H(T , N_{\ii,t_n})\leq\epsilon$, 
and $d_H(M_{\ii,t_n}, F)\leq \epsilon$ for all $n\geq n_0$. By the triangle inequality, we then 
have 
  \begin{align*}
    d_H(\OO T,F)
    &\leq d_H( \OO T , \OO_{\ii,t_n}T)
    + d_H( \OO_{\ii,t_n}T , M_{\ii,t_n})
    +d_H(M_{\ii,t_n} , F)\leq 3\epsilon
  \end{align*}
  which completes the proof.
\end{proof}

\section{Discussion}\label{sec:discussion}

We formulated our main theorem by using assumptions as general as possible. In Remarks 
\ref{re:ontheassumptions2} and \ref{re:ontheassumptions}, we provided the reader with sufficient 
and 
checkable conditions. In this section, we continue this discussion by examining the effect of some 
of the assumptions and exhibiting examples. We also prove Corollary \ref{cor:carpet} stated in the 
introduction.

We say that a subset of the self-affine set $A\subset E$ satisfies the {\bf line condition}, if 
there exists $N\in \N$ such that for all $\pi(\ii) \in A$ and for all large $n$, any line in 
direction $\vartheta_2(\iin{n-N})$ that intersects $E_{\iin{n}}$ also intersects $E_{\iin{n-N}\jj}$ 
for some $\jj\in I^*$ that satisfies $\ii(n-N+1)\neq\jj(1)$.

\begin{proposition}\label{cor:nopoint}
  Under the assumptions of Theorem \ref{main}, if for all $\roo>0$ there is a subset $E_\roo 
\subset 
E$ with $\nu_p(E_\roo) \ge 1 - \roo$ satisfying the line condition for some $N_\roo\in \N$, then 
for 
$\nu_p$-almost all $x \in E$ the tangent sets are of the form $\OO((\R\times C) \cap B(0,1))$, 
where 
$C$ is a closed porous set and $\OO$ is the rotation taking $(0,1)$ to 
$\overline{\vartheta}_1(\pi^{-1}(x))$. 
\end{proposition}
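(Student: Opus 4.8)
The plan is to show that under the line condition, the second alternative in Theorem~\ref{main} -- where the tangent is a horizontal line segment -- cannot occur on a set of positive measure, so that only the first alternative $\OO((\R \times C) \cap B(0,1))$ survives. Recall from the proof of Lemma~\ref{lem:modifiedtangents} that the line-segment alternative arises precisely in the case \eqref{tozero}, that is, when $\alpha_2(\iin{n(\ii,t_k)})/t_k \to 0$ along the zooming sequence, and more specifically when at least one endpoint of the construction rectangle $R_{\iin{t_k}}$ reaches the boundary of the screen in the limit. So it suffices to rule out this escape-to-the-boundary scenario for $\nu_p$-almost every point of $E_\roo$, and then let $\roo \downarrow 0$ exactly as in the proof of Theorem~\ref{main}.

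Here is how I would rule it out. Fix $\roo > 0$ and the corresponding $E_\roo$ satisfying the line condition with constant $N = N_\roo$. Fix a generic $\ii \in E_\roo$ (so that $\overline\vartheta_1(\ii)$ exists, the Lyapunov exponents are attained, etc.) and suppose $t$ is small, with $n = n(\ii,t)$ large, say $n > N$ and $n - N \ge n_0$ where $n_0$ comes from assumption~\eqref{as:projection}. The key estimate is a lower bound on $\dist(x, E_{\iin{n}\setminus})$ of the type already used in Lemma~\ref{lem:modifiedtangents}: by the line condition, any line through an endpoint of $R_{\iin{n}}$ in direction $\vartheta_2(\iin{n-N})$ that meets $E_{\iin{n}}$ also meets $E_{\iin{n-N}\jj}$ for some $\jj$ with $\ii(n-N+1) \ne \jj(1)$, hence meets a construction cylinder \emph{disjoint} from $E_{\iin{n}}$. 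Because $B(x,t)$ meets only the level-$n$ cylinder $E_{\iin{n}}$ (by the choice of $n(\ii,t)$), the distance from $x$ to that other cylinder is at least $t$; but along the line $\ell$ this distance is at most $d + v_{\iin{n}} + (\text{gap along }\ell)$, and the gap is controlled by $v_{\iin{n-N}}/\cos\theta_n \le 2 v_{\iin{n-N}} \le 2L\ua^{-N}\alpha_2(\iin{n})$ using Lemma~\ref{heigthandlength} and Lemma~\ref{Oselemma} to make $\cos\theta_n \ge 1/2$ for small $t$. Here $d$ is the distance from $x$ to the endpoint of $R_{\iin{n}}$. Rearranging, $t \le d + C\alpha_2(\iin{n})$ with $C = C(N) = L(1 + 2\ua^{-N})$ an absolute constant. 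Thus $d/t \ge 1 - C\alpha_2(\iin{n})/t$.

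Now I apply this to the escape scenario. Along the relevant subsequence $(t_k)$ we are in case \eqref{tozero}, so $\alpha_2(\iin{n(\ii,t_k)})/t_k \to 0$; combined with the displayed bound this forces $d_k/t_k \to 1$, where $d_k$ is the distance from $x$ to the offending endpoint. But the line condition applies to \emph{both} endpoints of $R_{\iin{n}}$ simultaneously -- the argument above never used which endpoint it was -- so in fact the distance from $x$ to \emph{each} endpoint of $R_{\iin{n}}$ tends, relative to $t_k$, to $1$. That is impossible once both endpoints are visible, since the horizontal diameter of $R_{\iin{t_k}}$ visible inside the screen is at most $2t_k$ while the two endpoint-distances would sum to $2t_k(1 - o(1))$ with both endpoints strictly inside, forcing $h_{\iin{t_k}}/t_k \to 2$; and if only one endpoint is visible along a subsequence, the very same bound $d_k/t_k \to 1$ says that this visible endpoint escapes to the boundary -- which is exactly the horizontal-line-segment case -- yet the line condition produces, through the endpoint, a cylinder disjoint from $E_{\iin{n}}$ at distance $\le d_k + C\alpha_2(\iin{n})= t_k(1 + o(1))$, hence \emph{inside} $B(x, t_k(1+o(1)))$. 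Reading this back into the scenery $N_{x,t_k}$, it shows the tangent $T$ contains, arbitrarily close to the boundary point $(1,0)$ or $(-1,0)$, a point of $E$ lying \emph{off} the candidate horizontal segment -- contradicting that $T$ is that horizontal segment. Therefore the line-segment alternative is excluded for this $\ii$, and by Lemma~\ref{lem:modifiedtangents} every modified tangent at $\pi(\ii)$ is of the form $(\R \times C) \cap B(0,1)$ with $C$ closed and porous. Undoing the rotation as in the proof of Theorem~\ref{main}, every tangent set of $E$ at $x = \pi(\ii)$ is $\OO((\R \times C) \cap B(0,1))$ with $\OO$ taking $(1,0)$ to $\overline\vartheta_1(\pi^{-1}(x))$ (matching the statement's $(0,1) \mapsto \overline\vartheta_1$ up to the obvious relabelling of axes). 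Finally, since this holds on $E_\roo$ for every $\roo$, it holds $\nu_p$-almost everywhere on $E$, by the same exceptional-set-halving argument used to prove Theorem~\ref{main}.

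The main obstacle I expect is making the geometric step of the previous paragraph fully rigorous: one has to track carefully that the line-condition cylinder $E_{\iin{n-N}\jj}$, which is genuinely disjoint from $E_{\iin{n}}$, lands inside the slightly enlarged screen $B(x, (1+o(1))t_k)$ and survives the zoom-out $\ZZ_{x,t_k}$ as a bona fide point of the limit $T$ witnessing non-degeneracy -- this requires a compactness/limit argument in the Hausdorff metric, keeping the $o(1)$ errors under control uniformly, and one must also handle the possibility that $E_{\iin{n-N}\jj}$ itself is very small (its diameter is at least $\ldim$-comparable to $\alpha_2(\iin{n-N})$, which is comparable to $\alpha_2(\iin{n})$, hence negligible against $t_k$, so it does contribute a genuine boundary point of $T$ near $(\pm 1, 0)$ -- but this needs to be spelled out). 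Everything else is a direct repackaging of estimates already established in Lemmas~\ref{heigthandlength}, \ref{Oselemma}, \ref{projectionlemma}, and \ref{lem:modifiedtangents}.
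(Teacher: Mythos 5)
Your overall strategy is the right one, and it is the paper's: the only thing the line condition needs to do is exclude alternative \eqref{tozero} of Lemma \ref{lem:modifiedtangents}, i.e.\ to show $\alpha_2(\iin{n(\ii,t)})/t$ is bounded away from $0$, after which only the porous product form survives and the $\roo\downarrow 0$ and rotation steps go through as in the proof of Theorem \ref{main}. The gap is in how you try to extract this from the line condition. You only apply the condition to lines through the \emph{endpoints} of $R_{\iin{n}}$, which yields $t\le d+C\,\alpha_2(\iin{n})$ and hence merely $d/t\ge 1-o(1)$ for each endpoint; that is not a contradiction (it is exactly the configuration in which the line-segment tangent arises), and your attempt to finish by exhibiting a point of a disjoint cylinder ``near the boundary, off the horizontal segment'' fails for the reason you yourself flag: that cylinder lies \emph{outside} $B(x,t_k)$ --- indeed $\dist(x,E_{\iin{n-N}\jj})>t_k$ is precisely how the inequality was obtained --- so it never enters the scenery $N_{x,t_k}=\ZZ_{x,t_k}(E\cap B(x,t_k))$ and contributes nothing to the Hausdorff limit $T$; and even in a marginally enlarged screen its image sits within vertical distance $O(\alpha_2(\iin{n-N}))/t_k=o(1)$ of the horizontal axis near $(\pm1,0)$, so it would not contradict a horizontal-segment tangent anyway. (A minor further slip: $\alpha_2(\iin{n-N})\le\la^{-N}\alpha_2(\iin{n})$, not $\ua^{-N}\alpha_2(\iin{n})$; the latter inequality goes the wrong way, though only a constant is at stake.)

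The repair is short and is what the paper does: apply the line condition to the line in direction $\vartheta_2(\iin{n-N})$ through $x$ itself. Since $x\in E_{\iin{n}}$, this line meets $E_{\iin{n}}$, hence meets some $E_{\iin{n-N}\jj}$ with $\jj(1)\ne\ii(n-N+1)$, which is disjoint from the unique level-$(n-N+1)$ cylinder meeting $B(x,t)$; therefore $\dist(x,E_{\iin{n-N}\jj})>t$, while along the line this distance is at most the extent of $E_{\iin{n-N}}$ in direction $\vartheta_2(\iin{n-N})$, i.e.\ at most $2L\,\alpha_2(\iin{n-N})\le 2L\bigl(\min_{\jj\in I^{N_\roo}}\alpha_2(\jj)\bigr)^{-1}\alpha_2(\iin{n})$. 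This gives $\alpha_2(\iin{t})\ge c(N_\roo,L)\,t$ with no $d$-term, so \eqref{tozero} is impossible on $E_\roo$, and the rest of your write-up (porosity of $C$ via Lemmas \ref{lem:bad set}--\ref{lem:modifiedtangents}, the rotation, and exhaustion in $\roo$) is correct as it stands.
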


\begin{proof}
  Fix $\roo>0$. By the line condition, there exist a set $E_\roo \subset E$ with $\nu_p(E_\roo)\geq 
1-\roo$ and an integer $N_\roo\in\N$, where $$\alpha_{2}(\iin{t})  \geq \min\{\alpha_2(\ii) : 
\ii\in 
I^{N_\roo}\} t$$ for all small $t>0$. This means that the option \eqref{tozero} in the proof of 
Lemma \ref{lem:modifiedtangents} is impossible in this set. Hence, a closed porous set is the only 
possible outcome. Letting $\roo\downarrow 0$ proves the claim. 
\end{proof}

\begin{remark}
  Without the line condition, it is certainly possible that the tangent sets are of the form 
$\OO((\ell \times \{ 0 \}) \cap B(0,1))$ for a suitable rotation $\OO$. For example, consider a 
self-affine carpet for which the first level construction rectangles are horizontally aligned and 
disjoint, and their projection onto the $x$-axis is a line segment but, except for the vertical 
edges, no construction rectangle is above another. It is evident that at a generic point the 
tangent 
sets are of the form $(\ell \times \{ 0 \}) \cap B(0,1)$, where $\ell$ is an interval containing at 
least one of the intervals $[-1,0]$ and $[0,1]$.
\end{remark}

With Proposition \ref{cor:nopoint}, we are now ready to prove Corollary \ref{cor:carpet}.

\begin{proof}[Proof of Corollary \ref{cor:carpet}]
  To apply Proposition \ref{cor:nopoint}, we have to verify the assumptions of Theorem \ref{main} 
and check that the line condition holds. The assumption \eqref{as:separation} is trivially 
satisfied 
and, by recalling Remark \ref{re:ontheassumptions2}, the condition \eqref{cor:carpet lyapunov} of 
Corollary \ref{cor:carpet} implies the assumption \eqref{as:lyapunov}. Moreover, Remark 
\ref{re:ontheassumptions} guarantees that it suffices to check the projection condition only onto 
the horizontal direction. But this is guaranteed by the condition \eqref{carpet_line} of Corollary 
\ref{cor:carpet} since every line in the vertical direction meets at least two of the first level 
construction rectangles. This also means that the whole set $E$ satisfies the line condition (with 
$N=1$). Proposition \ref{cor:nopoint} thus applies and shows that the tangent sets at almost all 
points are of the form $(\R \times C) \cap B(0,1)$, where $C$ is a closed porous set. It remains to 
prove that $C$ does not contain any isolated 
points. Here one can argue in exactly the same way as in the proof of \cite[Theorem 
1]{BandtKaenmaki2013}.
\end{proof}

\begin{figure}[!t]
\subfigure[Scale=1]{\includegraphics[width=3.5cm]{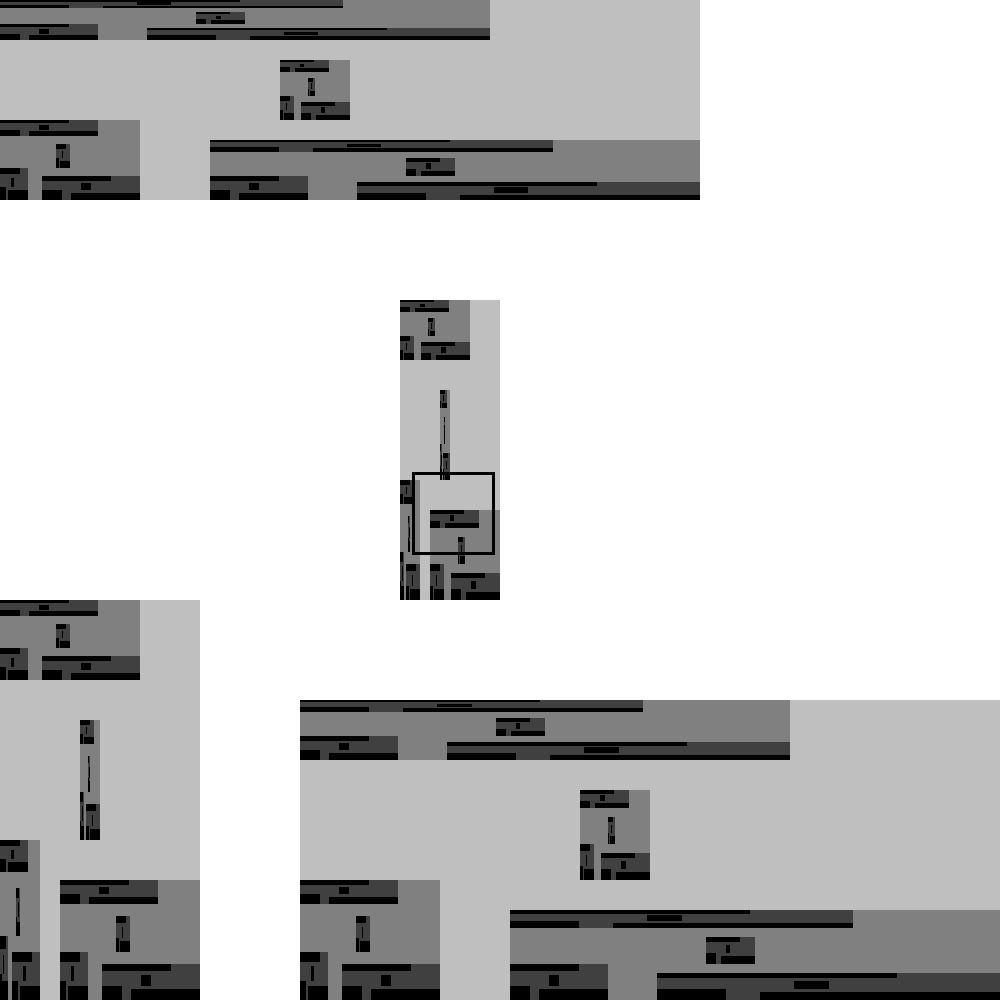}\label{zoom0}}\quad
\subfigure[Scale=0.04]{\includegraphics[width=3.5cm]{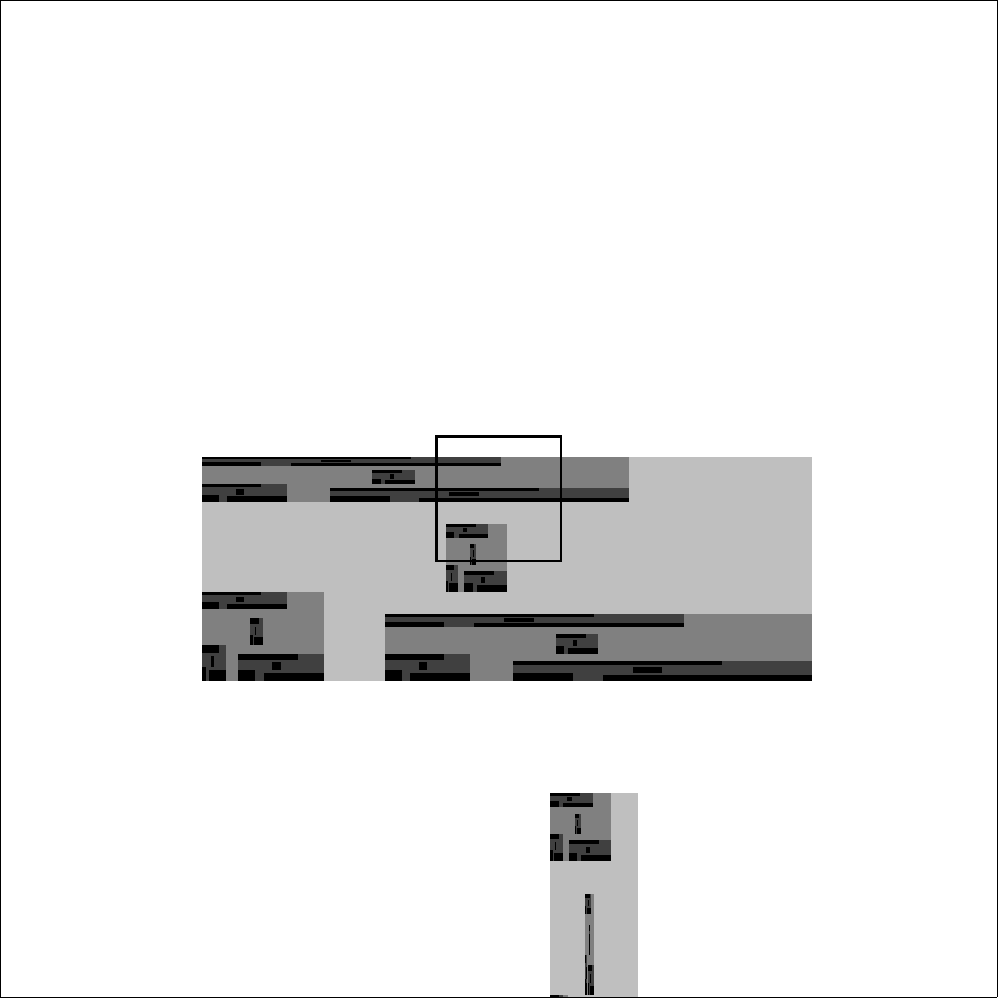}\label{zoom1}}\quad
\subfigure[Scale=0.005]{\includegraphics[width=3.5cm]{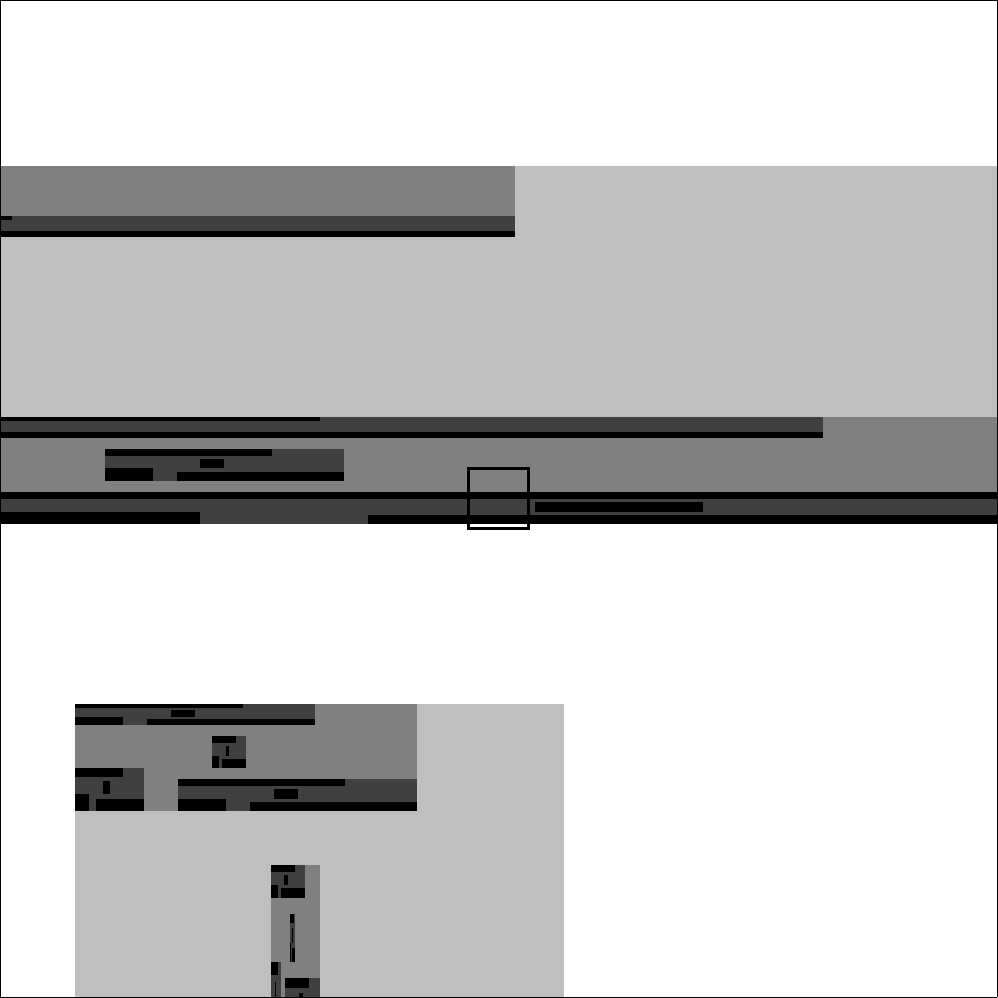}\label{zoom2}}\quad
\subfigure[Scale=0.0003]{\includegraphics[width=3.5cm]{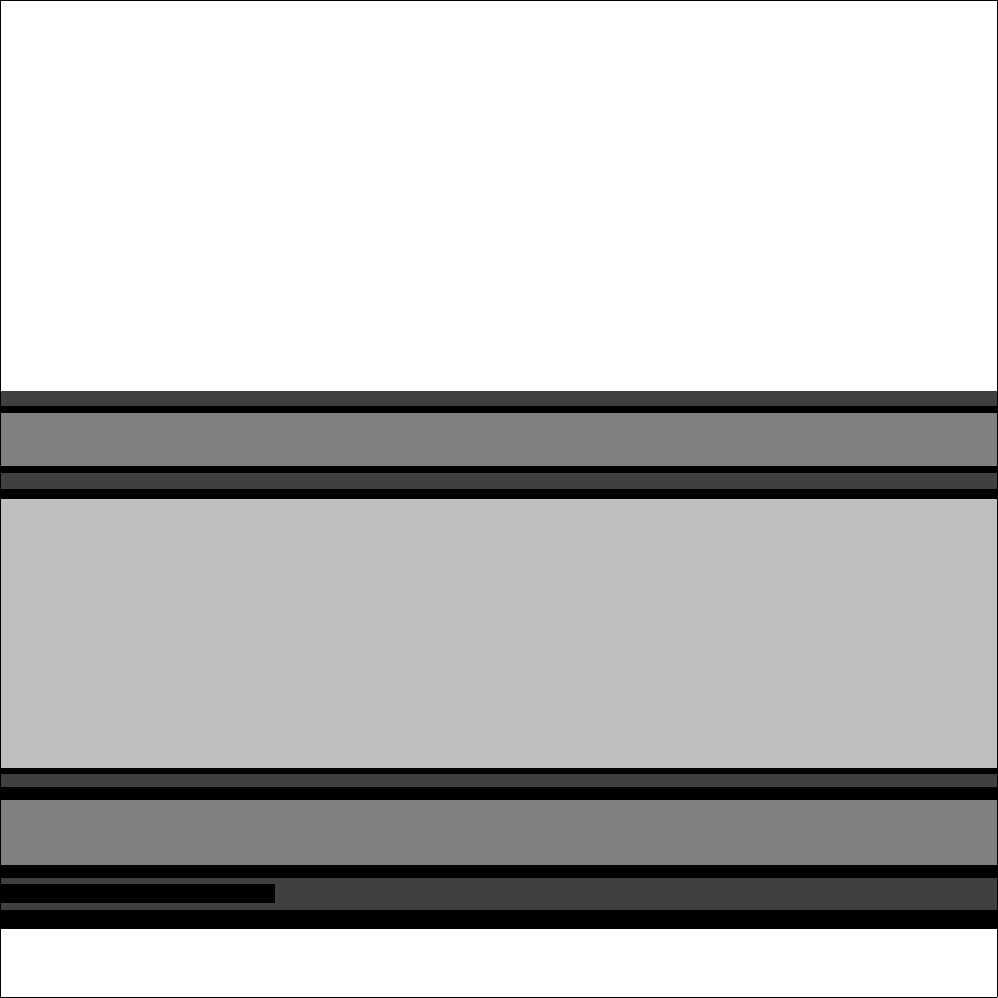}\label{zoom3}}
\caption{The tangent structure arises as one magnifies deeper and deeper in Example 
\ref{IFSexamples}. Here the point where we zoom upon is (0.453846, 0.486659). For illustrative 
purposes we have used a square screen instead of a circular one.}
\label{zoompicture}
\end{figure}

\begin{example}\label{IFSexamples}
  We exhibit an example for which Proposition \ref{cor:nopoint} applies. Let
  \begin{align*}
  f_1(x) &=
  \begin{pmatrix}
    0.2 & 0 \\
    0 & 0.4
  \end{pmatrix} x,
  & f_2(x) &=
  \begin{pmatrix}
    0.7 & 0 \\
    0 & 0.3
  \end{pmatrix} x +
  \begin{pmatrix}
    0.3 \\ 0
  \end{pmatrix}, \\
  f_3(x) &=
  \begin{pmatrix}
    0.7 & 0 \\
    0 & 0.2
  \end{pmatrix} x +
  \begin{pmatrix}
    0 \\ 0.8
  \end{pmatrix},
  & f_4(x) &=
  \begin{pmatrix}
    0.1 & 0 \\
    0 & 0.3
  \end{pmatrix} x +
  \begin{pmatrix}
    0.4 \\ 0.4
  \end{pmatrix},
  \end{align*}
  and $p = (\tfrac16, \tfrac13, \tfrac13, \tfrac16)$. See Figure \ref{zoom0} for an illustration.

  Let us verify the assumptions of Theorem \ref{main} and check that the line condition is 
satisfied. By looking at Figure \ref{zoom0}, it is apparent that the assumption 
\eqref{as:separation} is satisfied. Verifying the assumption \eqref{as:lyapunov} just requires 
checking that \eqref{eq:3.1} holds. For the assumption \eqref{as:projection} to hold, since the 
right-hand side of \eqref{eq:3.1} is greater than the left-hand side in this example, we only need 
to check that the projection onto the $x$-axis is a line-segment. This is again clear from Figure 
\ref{zoom0}. It is also clear from the picture that the set $A_\lambda = ([0,1-\lambda] \times 
[0,1]) \cap E$ satisfies the line condition for all $\lambda>0$. Since the Bernoulli measure 
$\nu_p$ 
has no atoms, we see that for each $\roo>0$ there is $\lambda(\roo)>0$ such that 
$\nu_p(A_{\lambda(\roo)}) \ge 1-\roo$. 
  
  Therefore, according to Proposition \ref{cor:nopoint}, for $\nu_p$-almost every $x \in E$ the 
tangent sets are of the form $(\R \times C) \cap B(0,1)$, where $C$ is a closed porous set. To 
experiment this result in Figure \ref{zoompicture}, observe how the tangent structure begins to 
``converge'' as we magnify deeper and deeper into the set $E$. In Figures \ref{zoom0}--\ref{zoom2}, 
one can see the screen of the next magnification. Each picture shows four subsequent levels of the 
construction so that the level $n(x,t)$ is white and the color gets darker as $n$ increases.
\end{example}

The next example shows that Theorem \ref{main} applies also outside of the class of self-affine 
carpets.

\begin{example}\label{ex:noncarpet}
 Let $R_1$ be the rotation of angle $\pi/2$ (counter-clockwise) and set
  \begin{align*}
  f_1(x) =
  \begin{pmatrix}
    0.5 & 0 \\
    0 & 0.05 \\
  \end{pmatrix}
  x +
  \begin{pmatrix}
    0.4 \\
    -0.4
  \end{pmatrix},\quad
  f_5(x) =
  \begin{pmatrix}
  0.2 & 0 \\
  0 & 0.05 \\
  \end{pmatrix}
  x +
  \begin{pmatrix}
    0.8 \\
    0
  \end{pmatrix},
  \end{align*}
  and $f_{k+i}(x)=R_1^i f_k(x)$ for $k\in\{1,5\}$ and $i\in\{1,2,3\}$. Finally, let $f_9(x)=0.2 
R_2(x)$, where $R_2$ is a rotation of angle $\theta$ so that $\pi/\theta$ is irrational.
  
  From Figure \ref{Example1} one sees that the strong separation condition and the condition given 
in Remark \ref{re:ontheassumptions}, to ensure the assumption \eqref{as:projection} of Theorem 
\ref{main}, are satisfied. When considering the projection condition, it is helpful to notice that 
the images of $[-1,1] \times \{ 0 \}$ and $\{ 0 \} \times [-1,1]$ under $f_i$ are contained in 
$f_i(X)$, where $X$ is the convex hull of $E$. Examining how these images are located in 
$f_i(B(0,1))$ helps to verify the condition given in Remark \ref{re:ontheassumptions}. In this 
reasoning, it is essential that $f_5$, $f_6$, $f_7$, and $f_8$ fix points in the unit circle. 
Observe that it is not necessary for the linear parts of $f_1, \ldots, f_8$ to be diagonal.
  
  To verify the assumption \eqref{as:lyapunov} of Theorem \ref{main}, we rely on Remark 
\ref{re:ontheassumptions2} and check that the iterated function system is pinching and twisting. 
Given a constant $C>1$, consider the word $\ii$ with $\ii(n)=1$ for all $n\in\N$. Now 
$\alpha_1(\iin{n})=0.5^n$ and $\alpha_2(\iin{n})=0.05^n$, so it is clear that for some large $n$ we 
have that $\alpha_1(\iin{n})>C\alpha_2(\iin{n})$. On the other hand, given a finite set of vectors 
$\{v,w_1,\ldots,w_n\}$, consider the word $\ii$ with $\ii(n)=9$ for all $n\in\N$. Since 
$\pi/\theta$ 
is irrational, the set $\{f_{\iii|_n}(v)/\|f_{\iii|_n}(v)\|\}_{n=1}^\infty$ is dense in $S^1$, and 
so there exists $n$ so that $f_{\iin{n}}(v)$ is not parallel to any $w\in\{w_1,\ldots,w_n\}$. Thus 
the Lyapunov exponents are different for any Bernoulli measure.
\end{example}

\begin{figure}[!t]
  \subfigure[]{\includegraphics[width=5cm]{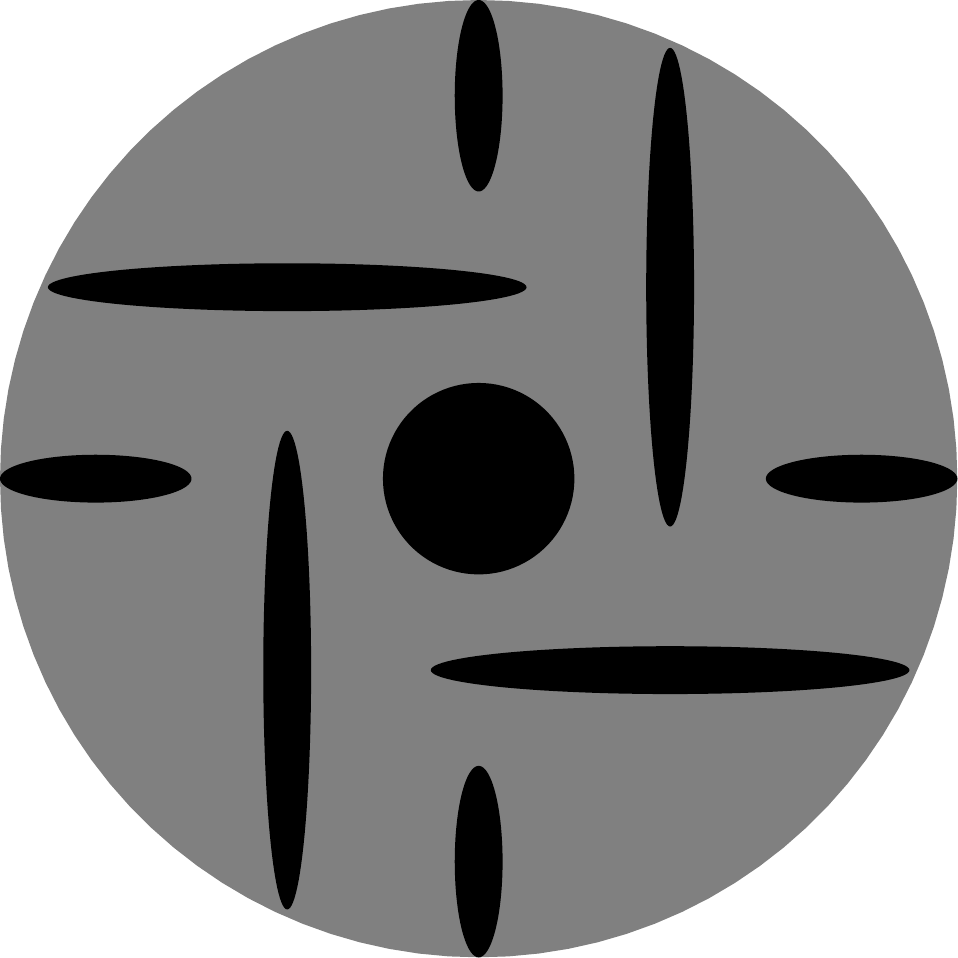}\label{Example1}} 
  \subfigure[]{\includegraphics[width=5cm]{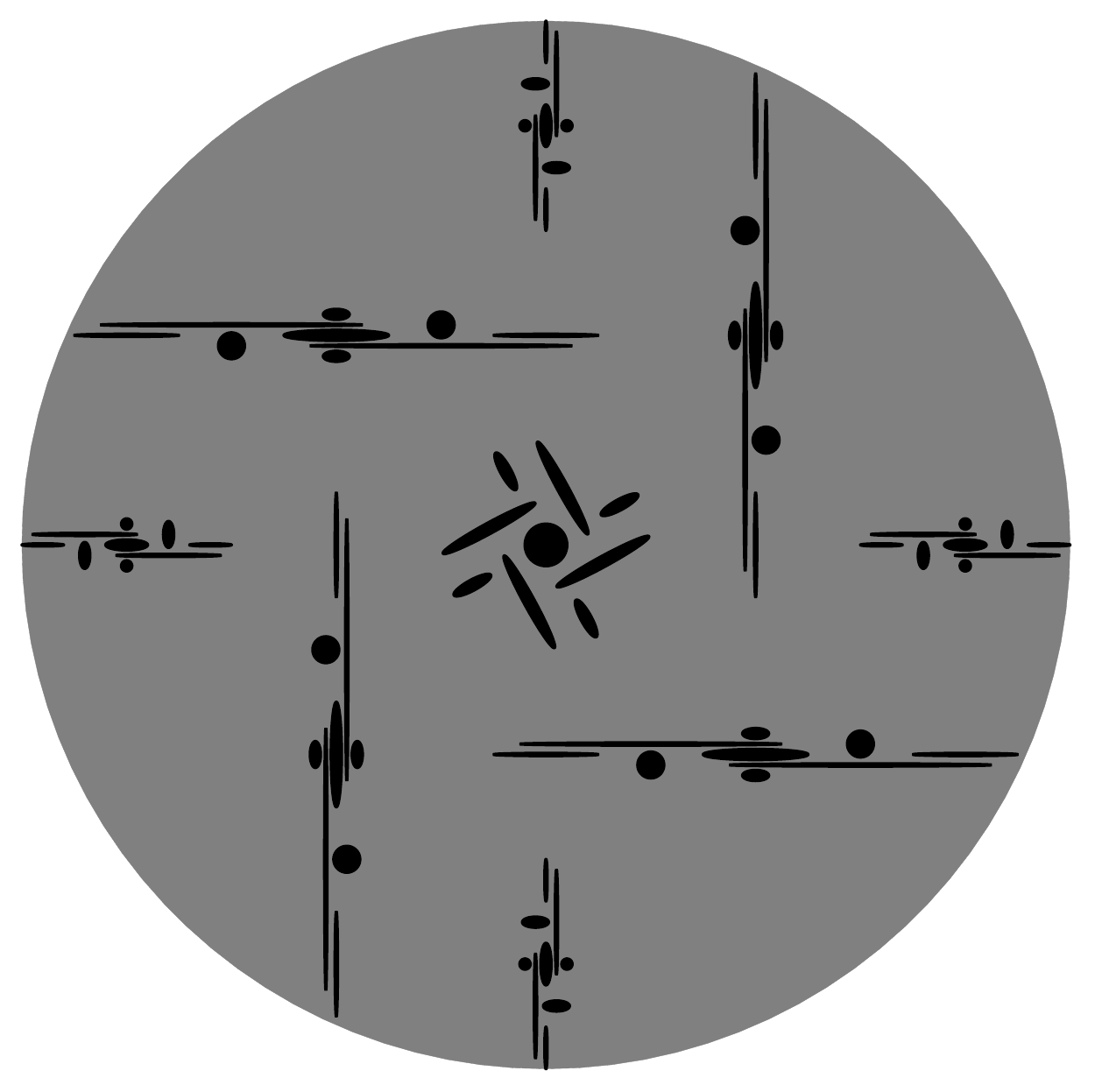}\label{Example2}}
  \caption{The pictures depict the first and second level images of $B(0,1)$ inside $B(0,1)$ for 
the 
iterated function system of Example \ref{ex:noncarpet}.}
  \label{genex}
\end{figure}

We finish the discussion by giving an example which shows that the assumption \eqref{as:projection} 
in Theorem \ref{main} is necessary.

\begin{example}\label{ex:noprojection}
  Consider the set $F=C_{1/3}\times C_{1/4}$, where $C_\lambda$ is the Cantor set constructed by 
using the contraction ratio $\lambda$. Then $F$ satisfies the assumptions \eqref{as:separation} and 
\eqref{as:lyapunov} of Theorem \ref{main}, with $p=(\tfrac14,\tfrac14,\tfrac14,\tfrac14)$.

  Let $T$ be the tangent set of $F$ along a sequence $(t_n)$ at a point  $(x^{(1)}, x^{(2)})\in F$. 
Denote by $T_1$ and $T_2$ the tangent sets of $C_{1/3}$ and $C_{1/4}$, at points $x^{(1)}$ and 
$x^{(2)}$ along the sequence $(t_n)$. (If necessary, take the sub-sequence of $(t_n)$ along which 
these both exist.) Fix $\epsilon>0$. Denote by $A_\epsilon$ the set of points within distance 
$\epsilon$ from a set $A$. For notational simplicity, we interpret $\ZZ_{x,t}$ to be a mapping 
defined on a square centered at $x$ with side length $t$. We use the same symbol $\ZZ_{x,t}$ to 
denote the corresponding zoom on the real line. Then, for large enough $n$, 
  \begin{align*}
    \ZZ_{x, t_n}(Q(x, t_n)\cap F)&=\ZZ_{x, t_n}(Q(x^{(1)}, t_n)\cap C_{1/3})\times \ZZ_{x, 
t_n}(Q(x^{(2)}, t_n)\cap C_{1/4})\\
    &\subset (T_1)_\epsilon\times (T_2)_\epsilon\subset (T_1\times T_2)_{3\epsilon}, 
  \end{align*}
  where the first inclusion uses the fact that $F$ is a product set, and the second the definition 
of tangent sets. Similarly, for large $n$, 
  \[
  \ZZ_{x, t_n}(Q(x, t_n)\cap F)_{3\epsilon}\supset T_1\times T_2. 
  \] 
  This proves that a tangent set at a point of $F$ is a product of tangent sets of $C_{1/3}$ and 
$C_{1/4}$.  

  By \cite{BedfordFisher1996}, tangent sets of a Cantor set are $C^{1+\gamma}$-images of the set 
itself. This implies that the tangent sets at points of $F$ are of the form $T_1\times T_2$ where 
$T_1$ and $T_2$ are $C^{1 + \gamma}$-images of the Cantor sets. In particular, the claim of Theorem 
\ref{main} does not hold. 
\end{example}

\bibliographystyle{abbrv}
\bibliography{Bibliography}

\end{document}